\newtheorem{thm}{Theorem}[section]
\newtheorem{lem}[thm]{Lemma}
\newtheorem{cor}[thm]{Corollary}
\newtheorem{defi}[thm]{Definition}
\def\R{{\mathbb R}}
\def\C{{\mathbb C}}
\def\q{{\frac{1}{|B|}}}
\def\2q{{\frac{2}{|B|}}}
\newcommand{\N}{\mathbb{N}}
\newcommand{\Z}{\mathbb{Z}_{+}}
\newcommand{\esssup}{\mbox{ess sup}}
\newcommand{\supp}{\mbox{supp}}
\renewcommand{\leq}{\leqslant}
\renewcommand{\geq}{\geqslant}
\begin{document}

\title{Multilinear pseudodifferential operators beyond Calder\'on-Zygmund theory}


\author{Nicholas Michalowski\footnotemark[2]{,} David J.\ Rule\footnotemark[3]{,} \& Wolfgang Staubach\footnotemark[3]{}\,\,\footnotemark[1]{}}
\footnotetext[1]{The third author is partially supported by the EPSRC First Grant Scheme, reference number EP/H051368/1.}
\date{}


\maketitle

\begin{abstract}
We consider two types of multilinear pseudodifferential operators. First, we prove the boundedness of multilinear pseudodifferential operators with symbols which are only measurable in the spatial variables in weighted Lebesgue spaces. These results generalise earlier work of the present authors concerning linear pseudo-pseudodifferential operators. Secondly, we investigate the boundedness of bilinear pseudodifferential operators with symbols in the H\"ormander $S^{m}_{\rho, \delta}$ classes. These results are new in the case $\rho < 1$, that is, outwith the scope of multilinear Calder\'on-Zygmund theory.
\end{abstract}

\section{Introduction}
The study of multilinear pseudodifferential operators goes back to the pioneering works of R.~Coifman and Y.~Meyer, \cite{CM1}, \cite{CM3}, \cite{CM4} and \cite{CM}. Since then, there has been a large amount of work on various generalisations of their results, as well as studies of bilinear operators with symbols satisfying different conditions to those in the standard bilinear Coifman-Meyer classes. The literature in this area of research is vast and any brief summary of it here would not do the authors justice. Therefore we confine ourselves to mention only those works with a direct connection to the present paper.

R.~Coifman and Y.~Meyer, in \cite{CM4} and \cite{CM}, proved the boundedness from $L^{p_1}\times L^{p_2} \times \dots \times L^{p_N}$ to $L^r$ of multilinear pseudodifferential operators with symbols in the class $S^0_{1,0}(n,N)$ (see Definition \ref{bilinear multiplier defn} below) for $1 < p_i < \infty$ and $r > 1$ with $1/p_1 + 1/p_2 + \dots + 1/p_n = 1/r$. In the seminal paper \cite{GT}, L.~Grafakos and R.~Torres systematically developed the theory of multilinear Calder\'on-Zygmund operators. They proved a multilinear $T(1)$-Theorem which they applied to generalise the result above to $r > 1/N$. As a further application, they demonstrated the boundedness in Lebesgue spaces of multilinear pseudodifferential operators which, together with each of the adjoint operators, belonged to $OPS^0_{1,1}(n,N)$ (see Definition \ref{def4}).

However, in \cite{BT2}, A.~B\'enyi and R.~Torres showed that there exist symbols in $S^0_{1,1}(n,2)$ that do not give rise to bilinear operators which are bounded from $L^{p_1} \times L^{p_2}$ to $L^r$ for $1 \leq p_1,p_2,r < \infty$ such that $1/p_1 + 1/p_2 = 1/r$. In particular, there is no analogue of the Calder\'on-Vaillancourt Theorem in the bilinear setting. Moreover, the class of operators $OPS^0_{1,1}(n,2)$ is not closed under transposition. In contrast, \cite{BT1} demonstrates that $OPS^0_{1,0}(n,2)$ is closed under transposition.

Recently, in \cite{BMNT}, A.~B\'enyi, D.~Maldonado, V.~Naibo and R.~Torres proved that $OPS^m_{\rho,\delta}(n,2)$ is closed under transposition for $0 \leq \delta \leq \rho \leq 1$ and $\delta < 1$. In particular, given an operator in $OPS^0_{1,\delta}(n,2)$, its adjoints are also in $OPS^0_{1,\delta}(n,2)$. Since $S^0_{1,\delta}(n,2) \subset S^0_{1,1}(n,2)$, it follows that symbols in $S^0_{1,\delta}(n,2)$ give rise to bounded operators, by applying the result of \cite{GT} quoted above.

In summary, we see that $OPS^0_{\rho,\delta}(n,2)$ are bounded on appropriate Lebesgue spaces when $\rho = 1$ (that is, the Calder\'on-Zygmund case), but in general they fail to be bounded when $\rho = 0$. The purpose of this paper is to address the following question, which is of interest for $\rho$ in-between these values, `Given $\rho \leq 1$, what $m = m(\rho) \leq 0$ is sufficient to ensure that symbols in $S^m_{\rho,\delta}(n,N)$ give rise to bounded operators?' This question is in the spirit of questions asked in \cite{BMNT}.

We will study this question for two different symbol classes. First, in Section \ref{main1}, we will consider a larger symbol class which does not require any differentiability in the spatial variable at all. That is, we study the multilinear symbol class $L^\infty S^m_{\rho}(n,N)$ (see Definition \ref{def2}) which, in particular, contains $S^m_{\rho,\delta}(n,N)$ for any $\delta$. Our main result in this context is Theorem \ref{multi_one}, which generalises a result obtained by the present authors in \cite{MRS1} regarding the linear case. The study of such symbol classes originates in \cite{KS}, where C.~Kenig and the third author studied linear operators. In the context of multilinear operators, results regarding mildly regular bilinear operators have been proved previously. In particular, D.~Maldonado and V.~Naibo established in \cite{MN} boundedness properties of bilinear pseudodifferential operators on products of weighted Lebesgue, Hardy, and amalgam spaces. The regularity they require in the spatial variables is only that of Dini-type. Section \ref{mixed norms} deals with linear operators on mixed-norm Lebesgue spaces, and is a corollary to the proof of Theorem \ref{multi_one}.

The second topic we will study is the bilinear symbol class $S^m_{\rho,\delta}(n,2)$. In Section \ref{LpSmrho and applications} we adapt methods used to study symbols in $L^\infty S^m_{\rho}(n,N)$ to weaken the requirement on $m$ necessary to prove boundedness on Lebesgue spaces of operators in $OPS^m_{\rho,\delta}(n,2)$ for $\delta \leq \rho$. This is formulated as Theorem \ref{smooth pseudo bilinear}. In Section \ref{outwith}, although we cannot show boundedness for general operators arising from symbols in $S^0_{\rho,\delta}(n,2)$, we can prove boundedness on a suitable subclass. This is stated as Theorem \ref{modulationtype}, which is a result of the same flavour as that proved by F.~Bernicot and S.~Shrivastava in \cite{BS} regarding a subclass of $OPS^0_{0,0}(1,2)$, albeit proved by more straight-forward methods. A related result regarding $OPS^0_{0,0}(n,2)$ was also proved in \cite{BT2}.

We begin the main body of the paper with Section \ref{prelim} where we set out some definitions, fix some notation and recall some well-known results that we will use later.

\section{Definitions, Notation and Preliminaries} \label{prelim}

We study the following type of \emph{multilinear pseudodifferential operator}. Given a function $a \colon \R^n \times \R^{nN} \to \C$ we define the $N$-linear operator $T_a$ to act on $N$ functions $u_1, \dots, u_N$ belonging to the Schwartz class $\mathscr{S}$ as
\begin{equation} \label{def1}
T_a (u_1,\dots,u_N)(x):=\int_{\R^{nN}} a(x,\Xi) \prod_{j=1}^N \widehat{u}_j(\xi_j) e^{ix\cdot\xi_j} d\,\Xi.
\end{equation}
Here $x,\xi_1,\dots,\xi_N$ are all variables in $\R^n$, $\Xi = (\xi_1,\dots,\xi_N) \in \R^{nN}$ and $\widehat{u} \colon \R^n \to \C$ denotes the Fourier transform
\begin{equation*}
\widehat{u}(\xi) = \int_{\R^n} u(y) e^{-i\xi\cdot y} dy
\end{equation*}
of $u\in \mathscr{S}$. We refer to the function $a$, which has $(N + 1)n$ variables, as the \emph{symbol} of the operator $T_a$.

We set $\Xi:=(\xi_1 ,\dots, \, \xi_N )$ with $\xi_j \in \mathbb{R}^n$, and define $|\Xi|^2:= \sum_{j=1}^{N} |\xi_j|^{2},$ where $|\xi_j|$ denotes the standard Euclidean norm of $\xi_j \in\R^n .$ Also, here and in the sequel we shall use $\Z$ to denote the set of nonnegative integers.\\
We will use a standard Littlewood-Paley partition of unity $\{\varphi_{k}\}_{k\geq 0}$ in $\R^{nN},$ by letting $\varphi_{0} \colon \R^{nN} \to \R$ be a smooth radial function
which is equal to one on the unit ball centred at the origin and supported on its concentric double. Setting
$\varphi(\Xi) = \varphi_0(\Xi) - \varphi_0(2\,\Xi)$ and $\varphi_k(\Xi) = \varphi(2^{-k}\,\Xi)$ for $k \geq 1,$ we have
\begin{equation} \label{lp}
\varphi_{0}(\Xi)+\sum_{k=1}^{\infty}\varphi_{k}(\Xi) =1 \quad\text{for all }
\Xi \in \R^{nN},
\end{equation}
and $\supp(\varphi_{k})\subset\{\Xi \mid 2^{k-1} \leq |\Xi| \leq 2^k\}$ for
$k\geq 1$. One also has, for all multi-indices $\alpha \in \Z^{Nn}$ and $N \geq 0$,
\begin{equation*}\label{eq2.2}
|\partial ^{\alpha}_{\Xi}\varphi _{0}(\Xi)| \leq
c_{\alpha, N} \langle \Xi\rangle ^{-N},
\end{equation*}
where $\langle \Xi\rangle:=(1+|\Xi|^{2})^{\frac{1}{2}}$, and
\begin{equation}\label{eq2.3}
 |\partial ^{\alpha}_{\Xi}\varphi _{k}(\Xi)| \leq c_{\alpha}
2^{-k|\alpha|}\quad \text{for some } c_{\alpha }>0
\text{ and all } k\geq 1.
\end{equation}

\begin{defi} \label{def2}
Given $N\in \N$, $m \in \R$ and $\rho \leq 1$ the symbol $a \colon \R^n \times \R^{Nn} \to \C$ is said to belong to $L^\infty S^m_{\rho}(n,N)$ when for each multi-index $\alpha \in \Z^{Nn}$ there exists a constant $C_\alpha$ such that
\begin{equation*} \label{ks1}
\esssup_x |\partial_\Xi^\alpha a(x,\Xi)| \leq C_\alpha \langle \Xi\rangle^{m-\rho |
\alpha|}.
\end{equation*}
\end{defi}
In the case $N=1$ we also use the notation $L^{\infty} S^{m}_{\rho}$ for the class of symbols of the linear pseudo-pseudodifferential operators, see \cite{KS}.

\begin{defi} \label{def4}
Given a class of symbols $X$, operators which arise from elements in $X$ are denoted by $OPX$. That is, we say $T\in OPX$ when there exists a symbol $a\in X$ such that $T = T_a$, as defined in \eqref{def1}. Consequently, for $a \in L^{\infty} S^{m}_{\rho} (n,N)$ we say $T_a \in OPL^{\infty} S^{m}_{\rho}(n,N)$.
\end{defi}

For a non-negative function $\mu$, which we refer to as a \emph{weight}, we define $L^p_\mu = L^p_\mu(\R^n)$ to be the closure of $u \in \mathscr{S}$ in the norm
\[
\left(\int_{\R^n} |u(x)|^p \mu(x)dx\right)^\frac{1}{p}.
\]
When $\mu \equiv 1$ we write simply $L^p = L^p(\R^n)$ to mean $L^p_1$ and $L^{p}_{\mathrm{loc}}$ is the class of functions which belong to $L^p_\mu$ for each $\mu$ which is the characteristic function of a compact set.

We wish to study the boundedness from $L^{q_1}_{w_1} \times \dots \times L^{q_N}_{w_N}$ to $L^{r}_{\mu}$ of the operator $T_a$, initially defined for Schwartz functions $u_1,\dots,u_N$ via \eqref{def1}, for particular exponents $q_1,\dots,q_N,r$ and weights $w_1,\dots,w_N,\mu$. Although the integral in \eqref{def1} may not be absolutely convergent for $u_1, \dots, u_N$ which do not decay sufficiently rapidly, if we can prove bounds on the operator norm which depend only on $q_1,\dots,q_N,r,w_1,\dots,w_N,\mu,n,N$ and $a$, then it is a straight-forward exercise to show that $T_a$ has a unique extension to $L^{q_1}_{w_1} \times \dots \times L^{q_N}_{w_N}$ which agrees with \eqref{def1} for $u_1,\dots,u_N \in \mathscr{S}$. This is the sense in which we will refer to the boundedness of $T_a$.

Given $u\in L^{p}_{\mathrm{loc}}$, the $L^p$ maximal function $M_p(u)$ is defined by
\begin{equation}
M_p(u)(x) = \sup_{B\ni x} \left(\q \int_{B} \vert u(y)\vert^{p} \, dy\right)^{\frac{1}{p}}
\end{equation}
where the supremum is taken over balls $B$ in $\R^{n}$ containing $x$. Clearly then, the Hardy-Littlewood maximal
function is given by
\[
M(u) := M_{1}(u).
\]
An immediate consequence of H\"older's inequality is that $M(u)(x)\leq M_{p}(u)(x)$ for $p\geq 1$.
We shall use the notation
\[
u_{B}:= \q \int_{B} \vert u(y)\vert \, dy
\]
for the average of the function $u$ over $B$.
One can then define the class of Muckenhoupt $A_p$ weights as follows.
\begin{defi} \label{weights}
Let $w\in L^{1}_{\mathrm{loc}}$ be a positive function. One says that $w\in A_1$ if there exists a constant $C>0$ such that
\begin{equation*}
M w (x)\leq C w(x),\,\,\, \text{for almost all} \,\,\, x \in \R^{n}.
\end{equation*}
One says that $w\in A_p$ for $p\in(1,\infty)$ if
\begin{equation*}
\sup_{B\, \textrm{balls in}\,\, \R^{n}}\,w_{B}(w^{-\frac{1}{p-1}})_{B}^{p-1}<\infty.
\end{equation*}
The $A_p$ constants of a weight $w\in A_p$ are defined by
\begin{equation*}
[w]_{A_1}:=   \sup_{B\, \textrm{balls in}\,\, \R^{n}}\,w_{B}\Vert w^{-1}\Vert_{L^{\infty}(B)},
\end{equation*}
and
\begin{equation*}
[w]_{A_p}:=  \sup_{B\, \textrm{balls in}\,\, \R^{n}}\,w_{B}(w^{-\frac{1}{p-1}})_{B}^{p-1}.
\end{equation*}
\end{defi}

The following results are well-known and can be found in, for example, \cite{S}.


\begin{thm} \label{maxweight}
For $1 < q < \infty$, the Hardy-Littlewood maximal operator is bounded on $L^q_w$ if and only if $w \in A_q$. Consequently, for $1 \leq p <
\infty$, $M_p$ is bounded on
$L^q_w$ if and only if $w \in A_{q/p}$
\end{thm}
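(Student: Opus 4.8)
The plan is to treat this as the classical Muckenhoupt theorem, proving the first equivalence directly and then deducing the statement about $M_p$ by a substitution. Throughout I would write $\sigma = w^{-1/(q-1)}$ for the dual weight, so that membership $w \in A_q$ reads $\sup_B w_B\,(\sigma)_B^{q-1} = [w]_{A_q} < \infty$. The necessity of the $A_q$ condition is the easy direction: fixing a ball $B$ and testing the assumed inequality $\|Mf\|_{L^q_w} \les \|f\|_{L^q_w}$ on $f = \sigma\chi_B$, one has $Mf(x) \geq \frac{1}{|B|}\int_B \sigma$ for every $x \in B$, while the algebraic identity $\sigma^q w = \sigma$ gives $\int_B |f|^q w = \int_B \sigma$. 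Combining these yields
\[
\left(\frac{1}{|B|}\int_B \sigma\right)^q \int_B w \les \int_B \sigma,
\]
which rearranges to $w_B\,(\sigma)_B^{q-1} \les 1$ uniformly in $B$, that is, $w \in A_q$.

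The substance is the converse: if $w \in A_q$ then $M$ is bounded on $L^q_w$. First I would establish the weak-type estimate $w(\{Mf > \lambda\}) \les \lambda^{-q}\|f\|_{L^q_w}^q$. A Vitali-type covering of the level set $\{Mf > \lambda\}$ produces a boundedly-overlapping family of balls $B_i$ on each of which $\frac{1}{|B_i|}\int_{B_i}|f| \gtr \lambda$. Splitting $|f| = (|f|w^{1/q})\,w^{-1/q}$ and applying H\"older's inequality with exponents $q$ and $q' = q/(q-1)$ on $B_i$ (noting $w^{-q'/q} = \sigma$), then invoking the $A_q$ condition in the form $(\sigma)_{B_i}^{q-1} \leq [w]_{A_q}/w_{B_i}$, gives
\[
\lambda^q \int_{B_i} w \les [w]_{A_q}\int_{B_i}|f|^q w.
\]
Summing over $i$ and using $w(\{Mf>\lambda\}) \les \sum_i w(B_i)$ together with the bounded overlap yields the claimed weak-type bound.

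To pass from weak type to strong type I would use the self-improving property of the Muckenhoupt classes: via a reverse H\"older inequality one shows that $w \in A_q$ forces $w \in A_{q_0}$ for some $q_0 < q$, whence the weak-type estimate just proved also holds at the exponent $q_0$ with respect to the measure $w\,dx$. Since $M$ is trivially bounded on $L^\infty$ with norm one, independently of the weight, the Marcinkiewicz interpolation theorem applied with respect to $w\,dx$ between the weak-$(q_0,q_0)$ endpoint and $L^\infty$ delivers the strong $(q,q)$ bound for every $q \in (q_0,\infty)$. I expect this openness/reverse-H\"older step to be the main obstacle, since it is the only genuinely non-elementary ingredient; the remaining pieces are covering and H\"older.

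Finally, the statement for $M_p$ follows by writing $M_p u = (M(|u|^p))^{1/p}$ and setting $g = |u|^p$. Then $\|M_p u\|_{L^q_w}^q = \int_{\R^n}(Mg)^{q/p}w$ while $\|u\|_{L^q_w}^q = \int_{\R^n}g^{q/p}w$, so boundedness of $M_p$ on $L^q_w$ is precisely boundedness of $M$ on $L^{q/p}_w$. The first part then identifies the necessary and sufficient condition as $w \in A_{q/p}$, meaningful exactly when $q/p > 1$, i.e. $q > p$.
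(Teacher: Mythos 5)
Your proposal is correct and is the classical Muckenhoupt argument: necessity by testing on $f = \sigma\chi_B$ with $\sigma = w^{-1/(q-1)}$, sufficiency via the weak-type $(q,q)$ bound from a covering argument combined with the reverse H\"older (openness) property and Marcinkiewicz interpolation, and the $M_p$ statement by the substitution $M_p u = (M(|u|^p))^{1/p}$, which reduces it to boundedness of $M$ on $L^{q/p}_w$. The paper gives no proof at all --- it states the theorem as well-known and cites \cite{S} --- and your argument is precisely the standard one found in that reference, modulo the routine technicality that in the necessity step one should first test with truncations of $\sigma$ to guarantee local integrability.
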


\begin{thm} \label{convolve}
Suppose that $\phi \colon \R^n \to \R$ is integrable non-increasing and radial. Then, for $u \in L^1$, we have
\[
\int \phi(y)u(x-y) \, dy \leq \|\phi\|_{L^1} M(u)(x)
\]
for all $x \in \R^n$.
\end{thm}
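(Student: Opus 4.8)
The plan is to reduce the statement to the defining property of the Hardy–Littlewood maximal function by decomposing $\phi$ into a superposition of normalised indicators of balls. Since the left-hand side only increases if we replace $u$ by $|u|$, and since an integrable radial non-increasing function is non-negative, I would first assume without loss of generality that $u \geq 0$. Writing $\phi(y) = \psi(|y|)$ with $\psi$ non-increasing, the key observation is that for each $t > 0$ the superlevel set $\{y \in \R^n : \phi(y) > t\}$ is a ball $B(0,r_t)$ centred at the origin, with $r_t$ finite for $t>0$ because $\phi \in L^1$.

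The main computation then proceeds via the layer-cake (Cavalieri) representation $\phi(y) = \int_0^\infty \mathbf 1_{\{\phi > t\}}(y)\, dt$, which by the previous step equals $\int_0^\infty \mathbf 1_{B(0,r_t)}(y)\, dt$ almost everywhere. Inserting this into the convolution and applying Tonelli's theorem, legitimate because all integrands are non-negative, gives
\[
\int_{\R^n} \phi(y) u(x-y)\, dy = \int_0^\infty \left(\int_{B(0,r_t)} u(x-y)\, dy\right) dt.
\]
After the substitution $z = x-y$, the inner integral equals $\int_{B(x,r_t)} u(z)\, dz = |B(x,r_t)|\, u_{B(x,r_t)}$, where $B(x,r_t)$ is a ball containing $x$.

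By the very definition of $M$, each average satisfies $u_{B(x,r_t)} \leq M(u)(x)$, so the inner integral is bounded by $|B(0,r_t)|\, M(u)(x)$, using translation invariance of Lebesgue measure. Pulling $M(u)(x)$ out of the $t$-integral and applying the layer-cake formula a second time yields $\int_0^\infty |B(0,r_t)|\, dt = \int_0^\infty |\{\phi > t\}|\, dt = \|\phi\|_{L^1}$, which is exactly the claimed bound. The only genuinely delicate point is the identification of the superlevel sets of $\phi$ as balls and the verification that $r_t$ is finite for every $t>0$; once integrability rules out the degenerate case $\{\phi > t\} = \R^n$, everything else is a routine application of Tonelli's theorem and the definition of the maximal function.
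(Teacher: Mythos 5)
Your proof is correct. The paper in fact gives no proof of Theorem \ref{convolve} at all---it is listed among the ``well-known'' results and attributed to \cite{S}---and your argument (reduction to $u\geq 0$, non-negativity of $\phi$ forced by integrability, identification of the superlevel sets as balls, layer-cake decomposition plus Tonelli, and the bound of each ball average by $M(u)(x)$) is precisely the standard proof found in that reference, with the delicate points (finiteness of $r_t$, translation invariance) handled correctly.
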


We will need the following multilinear version of the Hausdorff-Young theorem due to A.~Benedek and R.~Panzone~\cite{BP}.

\begin{thm} \label{hausdorff-young}
Suppose that $1 \leq p_N \leq p_{N-1} \leq \dots \leq p_1 \leq 2$ and
\begin{equation*}
K(x_1,\dots,x_n) = \int\dots\int a(\xi_1,\dots,\xi_n) \prod_{j=1}^N e^{i x_j\cdot\xi_j} d\xi_1 \dots d\xi_N.
\end{equation*}
Then
\begin{equation*}
\begin{aligned}
& \Big\{ \int\dots\Big\{\int \Big\{ \int |K(x_1,\dots,x_N)|^{p_1'} dx_1\Big\}^{\frac{p_2'}{p_1'}} dx_2 \Big\}^\frac{p_3'}{p_2'} \dots dx_N \Big\}^\frac{1}{p_N'} \\
& \leq \Big\{ \int\dots\Big\{\int \Big\{ \int |a(\xi_1,\dots,\xi_N)|^{p_1} d\xi_1\Big\}^{\frac{p_2}{p_1}} d\xi_2 \Big\}^\frac{p_3}{p_2} \dots d\xi_N \Big\}^\frac{1}{p_N}.
\end{aligned}
\end{equation*}
\end{thm}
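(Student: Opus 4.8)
The plan is to prove the estimate by induction on $N$, reducing at each stage to the one-variable Hausdorff--Young inequality together with Minkowski's integral inequality. For the base case $N=1$ the claim is precisely the classical Hausdorff--Young inequality $\|K\|_{L^{p_1'}}\leq\|a\|_{L^{p_1}}$ for $1\leq p_1\leq 2$, which one obtains from the Riesz--Thorin interpolation theorem applied to the endpoints $L^1\to L^\infty$ (the trivial bound $\|K\|_{L^\infty}\leq\|a\|_{L^1}$) and $L^2\to L^2$ (the Plancherel identity), with the appropriate normalisation of the Fourier transform.

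For the inductive step, suppose the inequality holds for $N-1$ variables. Given $a$, I would first transform in the last frequency variable only, setting
\[
\tilde a(\xi_1,\dots,\xi_{N-1},x_N):=\int a(\xi_1,\dots,\xi_N)\,e^{ix_N\cdot\xi_N}\,d\xi_N,
\]
so that, for each fixed $x_N$, the kernel $K(\cdot,x_N)$ is the $(N-1)$-fold Fourier transform of $\tilde a(\cdot,x_N)$ in the variables $\xi_1,\dots,\xi_{N-1}$. Since $p_1\geq\dots\geq p_{N-1}$ is again a decreasing sequence in $[1,2]$, the induction hypothesis applies for each fixed $x_N$ and bounds the inner $(N-1)$-fold iterated norm of $K(\cdot,x_N)$ by that of $\tilde a(\cdot,x_N)$; taking the outer $L^{p_N'}$ norm in $x_N$ of this pointwise-in-$x_N$ inequality then bounds the full left-hand side by the iterated norm of $\tilde a$ in which the $L^{p_N'}$ norm in $x_N$ is outermost while the $L^{p_j}_{\xi_j}$ norms sit inside, in the order $j=1,\dots,N-1$. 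The remaining task is to reorganise this mixed norm of $\tilde a$ into the mixed norm of $a$ on the right-hand side. First I would use Minkowski's integral inequality repeatedly to drive the outermost $L^{p_N'}_{x_N}$ norm all the way inside, past each $L^{p_j}_{\xi_j}$ norm; each such interchange is permissible and decreases the quantity because $p_j\leq 2\leq p_N'$. With the $L^{p_N'}_{x_N}$ norm now innermost and acting directly on $\tilde a$, I would apply the one-variable Hausdorff--Young inequality in the pair $(x_N,\xi_N)$ (with all other variables frozen), using $1\leq p_N\leq 2$, to replace it by the $L^{p_N}_{\xi_N}$ norm of $a$. Finally I would push this innermost $L^{p_N}_{\xi_N}$ norm back out past each $L^{p_j}_{\xi_j}$, again by Minkowski's inequality, which is now licensed because $p_N\leq p_j$; the result is exactly the iterated norm of $a$ appearing on the right.

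The substance of the argument lies entirely in the bookkeeping of these interchanges, and the one point that must be checked with care is that every application of Minkowski's integral inequality is oriented so as to yield the correct ($\leq$) inequality. This is where the monotonicity hypothesis is used in full: the chain $p_N\leq\dots\leq p_1\leq 2\leq p_1'\leq\dots\leq p_N'$ guarantees both that the dual exponent $p_N'$ exceeds every $p_j$ (so that it may be moved inward) and that $p_N$ is dominated by every $p_j$ (so that it may be moved outward), which are precisely the two families of swaps the argument requires. The routine measurability of the intermediate iterated norms, and the reduction to a dense class of $a$ for which all the integrals converge absolutely, are straightforward and I would dispatch them at the outset.
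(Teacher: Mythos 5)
The paper does not actually prove this statement: Theorem \ref{hausdorff-young} is imported as a known result of Benedek and Panzone \cite{BP} and used as a black box in the proof of Theorem \ref{multi_one}, so your proposal is being compared against a citation rather than an in-paper argument. Your proof is correct, and it is the natural elementary route (close in spirit to \cite{BP}): induction on $N$, the one-dimensional Hausdorff--Young inequality, and the generalized Minkowski inequality, which for exponents $1\leq s\leq t\leq\infty$ states that
\begin{equation*}
\Big\{ \int \Big\{ \int |G(x,y)|^{s} \, dy \Big\}^{\frac{t}{s}} dx \Big\}^{\frac{1}{t}} \leq \Big\{ \int \Big\{ \int |G(x,y)|^{t} \, dx \Big\}^{\frac{s}{t}} dy \Big\}^{\frac{1}{s}},
\end{equation*}
that is, moving the larger exponent inward (equivalently, the smaller one outward) can only enlarge an iterated norm. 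Your two families of swaps are licensed exactly as you say: $p_j \leq 2 \leq p_N'$ lets you drive $L^{p_N'}_{x_N}$ innermost, $p_N \leq p_j$ lets you pull $L^{p_N}_{\xi_N}$ outermost, and the inductive hypothesis applies since $1 \leq p_{N-1} \leq \dots \leq p_1 \leq 2$ is again a decreasing chain. Two small points. First, in your second paragraph you say each interchange ``decreases the quantity''; it must, and by the display above does, \emph{increase} it (the pre-swap expression is dominated by the post-swap one) --- otherwise your chain of inequalities from the left-hand side to the right-hand side would point the wrong way. Your final paragraph states the correct orientation criterion, so this is only a slip of wording, not of substance. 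Second, with the paper's unnormalised Fourier transform the one-dimensional Hausdorff--Young inequality carries a factor of the form $(2\pi)^{n/p'}$, so either normalise the transform, as you indicate, or accept a dimensional constant; this is immaterial here since the paper only ever invokes the theorem up to multiplicative constants.
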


As is common practice, we will denote constants which can be determined by known parameters in a given situation, but whose
value is not crucial to the problem at hand, by $C$. Such parameters in this paper would be, for example, $m$, $\rho$, $p$, $n$, $[w]_{A_p}$, and the constants $C_\alpha$ in Definition \ref{def2}. The value of $C$ may differ
from line to line, but in each instance could be estimated if necessary. We also write $a\lesssim b$ as shorthand for $a\leq Cb$.

\section{A pointwise estimate for operators in $OPL^\infty S^m_\rho (n,N)$ and the weighted boundedness of multilinear operators} \label{main1}

The following lemma will be useful in obtaining pointwise estimates for the kernel of operators in $OPL^\infty S^m_\rho (n,N).$ For $Z=(z_1 , \dots , z_N)$ with $z_j \in \mathbb{R}^n$ and $\Xi= (\xi_1 , \dots, \xi_N)$ with $\xi_j \in \mathbb{R}^n ,$ we define $$\langle Z, \Xi\rangle:= \sum_{j=1}^{N} z_j \cdot \xi_j.$$

\begin{lem}\label{zlemma}
Let $a\in L^\infty S^{m}_\rho(n,N)$ with $m\in \R$ and $\rho \in (0,1]$. Given any $Z=(z_1 , \dots , z_N)\in \R^{nN}$ such that the set $S:=\{ j \in [1, N] \,|\, |z_j|\geq 1\} \neq \emptyset$, one has
\begin{equation*}\label{elem estim}
\Big|\int_{\mathbb{R}^{nN}} a(x,\Xi)\, e^{i\langle Z, \Xi\rangle}\, d\,\Xi\Big| \lesssim \prod_{j\in S} |z_j|^{-N_j},
\end{equation*}
for all $x\in\R^{n}$ and $\min_{j \in S} N_j \geq 0.$
\end{lem}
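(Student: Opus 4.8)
The plan is to control the oscillatory integral by a Littlewood--Paley decomposition in the frequency variable $\Xi$ followed by integration by parts. Writing $a_k(x,\Xi) := \varphi_k(\Xi)\, a(x,\Xi)$, so that $a = \sum_{k\geq 0} a_k$ with each $a_k$ compactly supported in $\Xi$, I would define the integral in the statement as $\sum_{k\geq 0} K_k(x,Z)$, where $K_k(x,Z) := \int_{\R^{nN}} a_k(x,\Xi)\, e^{i\langle Z,\Xi\rangle}\, d\Xi$ is absolutely convergent. A first observation is that, on the support of $a_k$ (where $\langle\Xi\rangle\sim 2^k$ for $k\geq 1$), the Leibniz rule together with \eqref{eq2.3}, the estimate in Definition \ref{def2}, and the hypothesis $\rho\leq 1$ give $|\partial_\Xi^\gamma a_k(x,\Xi)|\lesssim 2^{k(m-\rho|\gamma|)}$ uniformly in $x$, for every multi-index $\gamma$.

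For each $k$ I would then integrate by parts in the blocks $\xi_j$ with $j\in S$. Since $|z_j|\geq 1$ for such $j$, there is a coordinate $\ell=\ell(j)$ with $|z_{j,\ell}|\geq |z_j|/\sqrt n$, and repeated integration by parts in $\xi_{j,\ell}$ (using $\partial_{\xi_{j,\ell}} e^{i\langle Z,\Xi\rangle} = i z_{j,\ell}\, e^{i\langle Z,\Xi\rangle}$ and the absence of boundary terms) replaces $a_k$ by $M_j$-th order derivatives and produces a factor $z_{j,\ell}^{-M_j}$, of size $\lesssim |z_j|^{-M_j}$. Carrying this out for all $j\in S$ and inserting the derivative bound above yields, after estimating the integrand trivially over the annulus $\{|\Xi|\sim 2^k\}$ of measure $\sim 2^{knN}$,
\begin{equation*}
|K_k(x,Z)| \lesssim \Big(\prod_{j\in S} |z_j|^{-M_j}\Big)\, 2^{k(m + nN - \rho\sum_{j\in S} M_j)}.
\end{equation*}

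It remains to sum in $k$. Given the target exponents, I would choose nonnegative integers $M_j\geq N_j$ for $j\in S$ with $\rho\sum_{j\in S} M_j > m + nN$; this is possible precisely because $\rho>0$, and it is here that this hypothesis is used. With this choice the geometric series $\sum_{k\geq 0} 2^{k(m+nN-\rho\sum_{j\in S} M_j)}$ converges, giving $\big|\sum_k K_k\big|\lesssim \prod_{j\in S}|z_j|^{-M_j}$; since $|z_j|\geq 1$ and $M_j\geq N_j$, this is bounded by $\prod_{j\in S}|z_j|^{-N_j}$, as required. The same argument handles the single term $k=0$ directly.

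I expect the main obstacle to be the convergence of the $k$-sum, and in particular making sure that each integration by parts genuinely contributes a factor $2^{-k\rho}$. This forces the use of directional (first-order) derivatives rather than the operator $(1-\Delta_{\xi_j})^{M_j}$, whose leading identity term would carry no decay in $2^k$ and hence fail to beat the volume growth $2^{knN}$. A secondary point requiring care is the interpretation of the (possibly non-absolutely-convergent) integral in the statement, which the decomposition into the summable pieces $K_k$ resolves.
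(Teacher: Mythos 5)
Your proof is correct and follows essentially the same route as the paper's: a Littlewood--Paley decomposition $a=\sum_{k\geq 0} a_k$, the uniform derivative bounds $|\partial_\Xi^\gamma a_k(x,\Xi)|\lesssim 2^{k(m-\rho|\gamma|)}$, integration by parts to gain $\prod_{j\in S}|z_j|^{-M_j}$, and a choice of integers $M_j\geq N_j$ with $\rho\sum_{j\in S}M_j>m+nN$ (using $\rho>0$) to sum the geometric series in $k$. The only cosmetic differences are that you integrate by parts in one well-chosen coordinate per block $j\in S$ where the paper instead multiplies by monomials $z_j^{\alpha_j}$ and sums over multi-indices, and that you treat the $k=0$ piece uniformly with the others, whereas the paper proves a separate, stronger rapid-decay bound $\prod_{j=1}^N\langle z_j\rangle^{-L}$ for $a_0$ (which it reuses later in the proof of Theorem \ref{multi_one}).
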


\begin{proof}
Setting $a_{k} (x, \Xi ):= a(x, \Xi ) \varphi_{k}(\Xi)$, and using the
definition of $L^{\infty}S_{\rho}^{m}(n,N)$, inequality \eqref{eq2.3} and
the Leibniz rule we see that
\begin{equation}\label{a_k bounds}
|\partial ^{\alpha}_{\Xi}a_{k}(x,\Xi)| \leq c_{\alpha} 2^{k(m-\rho| \alpha|) }, \,\, \text{for some} \,\, c_{\alpha }>0 \,\, \text{and} \,\, k= 1, 2,\dots
\end{equation}
and
\begin{equation}\label{a_0 bounds}
|\partial ^{\alpha}_{\Xi}a_{0}(x,\Xi)| \leq c_{\alpha, M} \langle \Xi\rangle^{-M}, \,\, \text{for some}
\,\, c_{\alpha,M}>0 \,\, \text{and} \,\, M \geq 0.
\end{equation}
where in \eqref{a_k bounds} we have also used the assumption $\rho \leq 1$.
We claim that
\begin{equation}\label{kernel a_0 estim}
\Big|\int_{\mathbb{R}^{nN}} a_{0}(x,\Xi)\, e^{i\langle Z, \Xi\rangle}\, d\,\Xi\Big| \lesssim \prod_{j=1}^{N}\langle z_j\rangle ^{-L},
\end{equation}
for all $L\geq 0$. Integrating by parts and using \eqref{a_0 bounds} with $M>nN$ yields
\begin{equation*}
\Big|\int_{\mathbb{R}^{nN}} \Big\{\prod_{j=1}^{N} z_{j}^{\alpha_j}\Big\}\,a_{0}(x,\Xi)\, e^{i\langle Z, \Xi\rangle}\, d\,\Xi\Big|
= \Big|\int_{\mathbb{R}^{nN}}\Big\{ \prod_{j=1}^{N}\partial ^{\alpha_ j}_{\xi_j}\,a_{0}(x,\Xi)\Big\}\, e^{i\langle Z, \Xi\rangle}\, d\,\Xi\Big|
\lesssim \int_{\mathbb{R}^{nN}}\langle \Xi\rangle^{-M}\, d\,\Xi\lesssim 1,
\end{equation*}
Now summing both sides of the above estimate over all $\alpha_j$ with $\sum_{j=1}^{N} |\alpha_j|\leq L$ and using the straightforward inequality $\prod_{j=1}^{N}\langle z_j\rangle ^{L} \lesssim \sum_{|(\alpha_1,\dots,\alpha_N)|\leq L}\prod_{j=1}^{N} z_{j}^{\alpha_j}$, we obtain \eqref{kernel a_0 estim}.
For the integrals containing $a_k$, integration by parts and \eqref{a_k bounds} yield
\begin{equation*}
\Big|\Big\{\prod_{j=1}^{M} z_{j}^{\alpha_j}\Big\} \int_{\mathbb{R}^{nN}} a_{k}(x,\Xi)\, e^{i\langle Z, \Xi\rangle}\, d\,\Xi\Big|
= \Big|\int_{\mathbb{R}^{nN}} \Big\{\prod_{j=1}^{M}\partial ^{\alpha_ j}_{\xi_j}\, a_{k}(x,\Xi)\Big\} e^{i\langle Z, \Xi\rangle}\, d\,\Xi\Big| \lesssim 2^{k(nN+m-\rho \sum_{j=1}^{M}|\alpha_j|)}.
\end{equation*}
Therefore, if $\sum_{j=1}^{M}|\alpha_j| >\frac{nN+m}{\rho}$ then $\Big|\sum_{k=1}^{\infty}\int_{\mathbb{R}^{nN}} a_{k}(x,\Xi)\, e^{i\langle Z, \Xi\rangle}\, d\,\Xi\Big|\lesssim \prod_{j=1}^{M}  |z_{j}| ^{-|\alpha_j |}.$ From this and the definition of the set $S$, by taking $\sum_{j=1}^{M}|\alpha_j| >\frac{nN+m}{\rho}$ and $|\alpha_{j}|\geq N_j$, it follows that
\begin{equation}\label{kernel a_k estim}
\Big|\sum_{k=1}^{\infty}\int_{\mathbb{R}^{nN}} a_{k}(x,\Xi)\, e^{i\langle Z, \Xi\rangle}\, d\,\Xi\Big|\lesssim \prod_{j\in S}  |z_{j}| ^{-N_j},
\end{equation}
for $N_j \geq 0$ and $Z \in \R^{nN}$ such that $S \neq \emptyset$. The estimate for $\int_{\mathbb{R}^{nN}} a(x,\Xi)\, e^{i\langle Z, \Xi\rangle}\, d\,\Xi$ follows by combining the estimates \eqref{kernel a_0 estim} and \eqref{kernel a_k estim}. This proves the lemma.
\end{proof}
As an immediate corollary we have the following kernel estimates
\begin{cor} \label{zlemma2}
Let $K(x,Y):=\int_{\mathbb{R}^{nN}}
a(x,\Xi) \prod_{j=1}^N e^{i(x-y_j) \cdot \xi_j}\, d\,\Xi$ and suppose $x$ and $Y$ are such that $S := \{j \in [1, N] \,|\, |x-y_j|\geq 1\} \neq \emptyset$.  Then one has
\begin{equation}\label{kernel1}
|K(x,Y)| \lesssim \prod_{j\in S}|x-y_j|^{-N_j}\quad \text{when} \quad
\min_{j \in S} N_j \geq 0
\end{equation}
provided either $\rho>0$ and $m \in \R$, or $\rho=0$ and $m<-nN$.
\end{cor}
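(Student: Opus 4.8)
The plan is to reduce the kernel $K(x,Y)$ directly to the oscillatory integral controlled by Lemma \ref{zlemma} via the substitution $z_j := x - y_j$. First I would observe that
\[
K(x,Y) = \int_{\R^{nN}} a(x,\Xi)\, \exp\Big(i\sum_{j=1}^N (x-y_j)\cdot\xi_j\Big)\, d\,\Xi
= \int_{\R^{nN}} a(x,\Xi)\, e^{i\langle Z,\Xi\rangle}\, d\,\Xi,
\]
where $Z = (z_1,\dots,z_N)$ with $z_j = x-y_j$ and $\langle Z,\Xi\rangle = \sum_{j=1}^N z_j\cdot\xi_j$ is exactly the pairing from Lemma \ref{zlemma}. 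Under the hypothesis of the corollary, the set $S = \{ j \mid |z_j| = |x-y_j| \geq 1 \}$ is nonempty, so in the case $\rho \in (0,1]$ the estimate \eqref{kernel1} is immediate from the conclusion of Lemma \ref{zlemma} with the given $N_j$. So for $\rho > 0$ there is essentially nothing left to prove beyond recording this identification.

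The only genuine work is the case $\rho = 0$, which is excluded from Lemma \ref{zlemma}. Here I would redo the argument of the lemma's proof, tracking where the hypothesis $\rho \in (0,1]$ was used. The bound \eqref{a_k bounds} degenerates to $|\partial_\Xi^\alpha a_k(x,\Xi)| \leq c_\alpha 2^{km}$ with no decay in $|\alpha|$, so integrating by parts $\alpha_j$ times in each block $\xi_j$ gives
\[
\Big| \Big\{\prod_{j=1}^M z_j^{\alpha_j}\Big\} \int_{\R^{nN}} a_k(x,\Xi)\, e^{i\langle Z,\Xi\rangle}\, d\,\Xi \Big| \lesssim 2^{k(nN+m)}.
\]
Summing over $k \geq 1$ now \emph{requires} $nN + m < 0$, i.e.\ $m < -nN$, to produce a convergent geometric series $\sum_k 2^{k(nN+m)} \lesssim 1$; this is precisely the hypothesis imposed for $\rho = 0$. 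With that convergence in hand, the same combinatorial step as in the lemma—dividing by $\prod_{j} |z_j|^{|\alpha_j|}$ and using $|\alpha_j| \geq N_j$ for $j \in S$—yields \eqref{kernel a_k estim}, and combining with the low-frequency piece \eqref{kernel a_0 estim} (whose proof did not use $\rho$ at all) gives \eqref{kernel1}.

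The main obstacle, such as it is, lies in the $\rho = 0$ summation: one must integrate by parts enough times to generate all the factors $|z_j|^{-N_j}$ for $j \in S$ \emph{simultaneously}, while ensuring the frequency localisation still sums. Unlike the $\rho > 0$ case, where extra derivatives buy extra decay $2^{-k\rho|\alpha|}$ and one can always win back the $2^{knN}$ volume factor by taking $|\alpha|$ large, when $\rho = 0$ the derivatives are free of charge but so is the divergence, so the gain must come entirely from the fixed budget $m < -nN$. I would therefore be careful to perform the integration by parts only in the coordinate blocks indexed by $S$ (where $|z_j| \geq 1$ makes the resulting negative powers genuinely decaying) and leave the remaining blocks untouched, exactly as the set $S$ is defined; the bound \eqref{kernel1} then follows with the stated constraints $\min_{j\in S} N_j \geq 0$.
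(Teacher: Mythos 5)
Your proposal is correct and follows essentially the same route as the paper: for $\rho>0$ the corollary is obtained by setting $z_j = x-y_j$ and invoking Lemma \ref{zlemma}, while for $\rho=0$ one reruns the lemma's proof, noting that the integration-by-parts bound degenerates to $2^{k(nN+m)}$ so that summability of the dyadic pieces forces exactly the hypothesis $m<-nN$. The paper states this modification in one sentence without detail; your write-up simply makes that examination explicit, including the correct observation that the low-frequency estimate \eqref{kernel a_0 estim} is independent of $\rho$.
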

\begin{proof}
When $\rho > 0$, this follows from Lemma \ref{zlemma} by setting $z_j= x-y_j$. An examination of the proof of Lemma \ref{zlemma} reveals that it can be easily modified for the case $\rho = 0$ provided $m<-nN$.
\end{proof}
The following theorem is the main result of this section.

\begin{thm} \label{multi_one}
Fix $p_j \in [1,2]$ for $j=1,\dots,N$ and let $a \in L^\infty S^m_\rho(n,N)$ with $0 \leq \rho \leq 1$ and $m < (\rho - 1)\sum_{j=1}^N\frac{n}{p_j}$. Then there
exists a constant $C$, depending only on $n$, $p_j$, $m$, $\rho$ and a finite number of the constants $C_\alpha$ in Definition
$\ref{def2}$, such that
\begin{equation} \label{pointws}
|T_a(u_1,\dots,u_N)(x)|\leq C \prod_{j=1}^N M_{p_j}(u_j)(x),
\end{equation}
for all $x \in \R^n$. Consequently, for $p_j < q_j \leq \infty$ and $r > 0$ such that $\frac{1}{r} = \sum_{j=1}^N \frac{1}{q_j}$, $T_a$ is a bounded operator from $L^{q_1}_{w_1} \times \dots \times L^{q_N}_{w_N}$ to $L^{r}_\mu$ whenever
\[
\mbox{$w_j \in A_{q_j/p_j}$ if $q_j < \infty$ or $w_j \equiv 1$ if $q_j=\infty$ for $j=1,\dots,N$,}
\]
and $\mu = \prod_{j=1}^N w_j^{r/q_j}$.
\end{thm}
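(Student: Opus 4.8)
The plan is to first establish the pointwise estimate \eqref{pointws} and then deduce the boundedness from it by a weighted H\"older inequality. For the pointwise bound I would, for Schwartz data, write $T_a(u_1,\dots,u_N)(x) = \int_{\R^{nN}} K(x,Y)\prod_{j=1}^N u_j(y_j)\,dY$ with the kernel $K(x,Y) = \int a(x,\Xi)\prod_{j=1}^N e^{i(x-y_j)\cdot\xi_j}\,d\Xi$ of Corollary \ref{zlemma2}, and decompose $a = \sum_{k\geq0} a_k$ with $a_k = a\varphi_k$ as in the proof of Lemma \ref{zlemma}, so that $K = \sum_k K_k$. Since the exponents $p_j\in[1,2]$ are not assumed ordered whereas Theorem \ref{hausdorff-young} requires $p_N\leq\dots\leq p_1$, I would first relabel the functions and the corresponding frequency blocks $\xi_j$ (which preserves membership in $L^\infty S^m_\rho(n,N)$ with the same seminorms and leaves the target $\prod_j M_{p_j}(u_j)$ unchanged), so that without loss of generality $p_1\geq p_2\geq\dots\geq p_N$.

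The key step is a weighted mixed-norm estimate on each $K_k$. Multiplying $K_k$ by $\prod_j (x-y_j)^{\beta_j}$ corresponds, after integrating by parts, to replacing $a_k$ by $\prod_j\partial_{\xi_j}^{\beta_j}a_k$, and \eqref{a_k bounds} gives $|\partial_\Xi^\beta a_k|\lesssim 2^{k(m-\rho|\beta|)}$ on $\{|\Xi|\sim 2^k\}$. Applying Theorem \ref{hausdorff-young} to these symbol derivatives, and then summing over $|\beta_j|\leq M_j$ with the factors $2^{k\rho|\beta_j|}$ supplied by the weight scale $2^{-k\rho}$, I expect to obtain
\[
\Big\|\prod_{j=1}^N\langle 2^{k\rho}(x-y_j)\rangle^{M_j}\,K_k(x,\cdot)\Big\|_{L^{p_1'}_{y_1}\cdots L^{p_N'}_{y_N}}\lesssim 2^{k\left(m+n\sum_{j=1}^N 1/p_j\right)},
\]
where the iterated norm integrates first in $y_1$ with exponent $p_1'$, then in $y_2$, and so on, and where $n\sum_j 1/p_j$ is exactly the mixed $L^{p_j}$-size of a function of modulus $\lesssim1$ supported on $\{|\Xi|\sim 2^k\}$. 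Inserting the weights $\prod_j\langle 2^{k\rho}(x-y_j)\rangle^{\pm M_j}$ into $\int|K_k|\prod_j|u_j|$ and iterating the one-dimensional H\"older inequality (mixed-norm H\"older), whose second factor $\prod_j\langle 2^{k\rho}(x-y_j)\rangle^{-M_j}|u_j(y_j)|$ factorises, bounds this by $2^{k(m+n\sum_j1/p_j)}\prod_{j=1}^N\|\langle 2^{k\rho}(x-\cdot)\rangle^{-M_j}u_j\|_{L^{p_j}}$.

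Each one-variable factor I would control by Theorem \ref{convolve}: writing $\int\langle 2^{k\rho}(x-y)\rangle^{-M_jp_j}|u_j(y)|^{p_j}\,dy$ as the convolution of $|u_j|^{p_j}$ with a radial non-increasing $\phi$ whose $L^1$ norm is $\lesssim 2^{-k\rho n}$ (taking $M_jp_j>n$) gives $\lesssim 2^{-k\rho n}M(|u_j|^{p_j})(x)=2^{-k\rho n}M_{p_j}(u_j)(x)^{p_j}$, so the factor is $\lesssim 2^{-k\rho n/p_j}M_{p_j}(u_j)(x)$. Collecting everything yields $\int|K_k|\prod_j|u_j|\lesssim 2^{k(m-(\rho-1)n\sum_j 1/p_j)}\prod_j M_{p_j}(u_j)(x)$, the $k=0$ term being handled identically with \eqref{a_0 bounds} in place of \eqref{a_k bounds}. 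Summing over $k\geq0$ converges precisely when $m<(\rho-1)\sum_{j=1}^N\frac{n}{p_j}$, giving \eqref{pointws}. I expect this weighted Hausdorff-Young estimate to be the crux of the argument: a cruder split into a near ball (treated by bare Hausdorff-Young) and a far region (treated by the bare kernel decay of Corollary \ref{zlemma2}) loses the sharp threshold, and it is the \emph{simultaneous} use of decay and Hausdorff-Young, packaged in the weight $\langle 2^{k\rho}(x-y_j)\rangle^{M_j}$, that produces exactly the condition $m<(\rho-1)\sum_j\frac{n}{p_j}$.

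Finally, for the boundedness I would raise \eqref{pointws} to the power $r$, integrate against $\mu=\prod_j w_j^{r/q_j}$, and apply H\"older in the $x$ variable with exponents $q_j/r$, which are admissible since $\sum_{j=1}^N r/q_j = r\sum_{j=1}^N 1/q_j = 1$. This dominates $\int|T_a(u_1,\dots,u_N)(x)|^r\mu(x)\,dx$ by $\prod_{j=1}^N\big(\int M_{p_j}(u_j)^{q_j}w_j\,dx\big)^{r/q_j}=\prod_{j=1}^N\|M_{p_j}(u_j)\|_{L^{q_j}_{w_j}}^r$. Since $q_j>p_j$ we have $q_j/p_j>1$, so $w_j\in A_{q_j/p_j}$ makes $M_{p_j}$ bounded on $L^{q_j}_{w_j}$ by Theorem \ref{maxweight} (with the case $q_j=\infty$, $w_j\equiv1$ being the trivial bound $M_{p_j}\colon L^\infty\to L^\infty$), and taking $r$-th roots gives the claimed boundedness, the extension from Schwartz data being routine as explained above.
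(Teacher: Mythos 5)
Your proposal is correct and is essentially the paper's own proof: the same Littlewood--Paley decomposition $a=\sum_k a_k$, the same weighted mixed-norm H\"older step combined with the Benedek--Panzone inequality (Theorem \ref{hausdorff-young}) applied to symbol derivatives obtained by integration by parts, the same use of Theorem \ref{convolve} to produce the factors $2^{-k\rho n/p_j}M_{p_j}(u_j)(x)$, and the same geometric summation and weighted maximal function argument at the end. The only difference is cosmetic: your smooth weight $\langle 2^{k\rho}(x-y_j)\rangle^{M_j}$ replaces the paper's two-piece power weight $\sigma_k^j$, and the two are pointwise comparable (up to the normalisation $2^{-k\rho n/p_j}$, which the paper builds into $\sigma_k^j$ and you carry separately).
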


\begin{proof}
The boundedness follows immediately from the pointwise estimate \eqref{pointws} by Theorem \ref{maxweight}.

To prove \eqref{pointws} we use the Littlewood-Paley partition of unity as in \eqref{lp}, we decompose the symbol as
\begin{equation*}
a(x,\Xi)= a_{0}(x,\Xi)+\sum_{k=1}^{\infty}a_{k}(x,\Xi)
\end{equation*}
with $a_k(x,\Xi)= a(x,\Xi)\varphi_{k}(\Xi)$, $k\geq 0$.

First we consider the operator $T_{a_0}$. We have
\begin{equation*}
\begin{aligned}
T_{a_{0}}(u_1,\dots,u_N)(x) & =\int_{\mathbb{R}^{nN}}\int_{\mathbb{R}^{nN}} a_{0}(x,\Xi)\, \prod_{j=1}^N e^{i (x-y_j) \cdot \xi_j}\, u_j(y_j)\, dY\,d\,\Xi \\
& = \int_{\mathbb{R}^{nN}} K_0 (x,Y) \prod_{j=1}^N \, u_j(x-y_j) \,dY,
\end{aligned}
\end{equation*}
with
\begin{equation*}
K_0 (x,Y)=\int_{\mathbb{R}^{nN}} a_{0}(x,\Xi)\, e^{i \langle Y, \Xi \rangle}\,d\,\Xi.
\end{equation*}
Now estimate \eqref{kernel a_0 estim} yields
\begin{equation*}
|K_0(x,Y)|\lesssim \prod_{j=1}^{N}\langle y_j\rangle ^{-L},
\end{equation*}
for each $L>0$ and hence for $L>n$. Therefore Theorem \ref{convolve} implies
\begin{equation} \label{pointwiseakestim2}
|T_{a_{0}}(u)(x)| \lesssim \prod_{j=1}^N \int_{\mathbb{R}^n} \langle y_j\rangle ^{-L} |u_j(x-y_j)|\, dy_j \lesssim \prod_{j=1}^N M(u_j)(x) \lesssim \prod_{j=1}^N M_{p_j}(u_j)(x),
\end{equation}
for any $1\leq p_j$.

Now let us analyse
$T_{a_{k}}(u_1,\dots,u_N)(x)=\int a_{k}(x,\Xi)
\prod_{j=1}^N\hat{u}_j(\xi)e^{i x \cdot \xi_j}d\,\Xi$ for $k\geq 1$. We note, just as before, that
$T_{a_k}(u_1,\dots,u_N)(x)$ can be written as
\begin{equation*}
T_{a_{k}}(u_1,\dots,u_N)(x)=\int_{\mathbb{R}^{nN}} K_{k}(x,Y)\prod_{j=1}^N u_j(x-y_j)dY
\end{equation*}
with
\begin{equation*}
K_{k}(x,Y)=\int_{\mathbb{R}^{nN}} a_{k}(x,\Xi) \prod_{j=1}^N e^{i y_j \cdot \xi_j}\,d\,\Xi.
\end{equation*}
One observes that
\begin{equation*}
 |T_{a_{k}}(u_1,\dots,u_N)(x)| =
\Big|\int_{\mathbb{R}^{nN}} K_{k}(x,Y)\prod_{j=1}^N u_j(x-y_j)\,dY\Big| =
\Big|\int_{\mathbb{R}^{nN}} K_{k}(x,Y)\prod_{j=1}^N \sigma_k^j(y_j)\frac{u_j(x-y_j)}{\sigma_k^j(y_j)}\,dY\Big|,
\end{equation*}
where the weight functions $\sigma_k^j$ will be chosen
momentarily. Therefore, H\"older's inequality yields
\begin{equation}\label{eq2.14}
\begin{aligned}
& |T_{a_{k}}(u_1,\dots,u_N)(x)| \\
& \leq
\Big\{ \int\dots\Big\{\int \Big\{ \int |K_{k}(x,Y)|^{p_1'} \prod_{j=1}^N|\sigma_{k}^j(y_j)|^{p_1'} dy_1\Big\}^{\frac{p_2'}{p_1'}} dy_2 \Big\}^\frac{p_3'}{p_2'} \dots dy_N \Big\}^\frac{1}{p_N'} \times \\
& \prod_{j=1}^N  \Big\{ \int \frac{|u_j(x-y_j)|^{p_j}}{|\sigma_k^j(y_j)|^{p_j}}dy_j\Big\}^\frac{1}{p_j},
\end{aligned}
\end{equation}
where $\frac{1}{p_j} +\frac{1}{p_j'}=1$. Now for an $s_j>n/p_j$, we
define $\sigma_k^j$ by
\begin{equation*}
\sigma_{k}^j(y)=\begin{cases}
2^{\frac{-k\rho n}{p_j}}, & |y| \leq 2^{-k\rho}; \\
2^{-k\rho(\frac{n}{p_j}-s_j)}|y|^{s_j}, & |y| >
2^{-k\rho}.
\end{cases}
\end{equation*}
We now wish to estimate
\begin{equation*}
\Big\{ \int\dots\Big\{\int \Big\{ \int |K_{k}(x,Y)|^{p_1'} \prod_{j=1}^N |\sigma_{k}^j(y_j)|^{p_1'} dy_1\Big\}^{\frac{p_2'}{p_1'}} dy_2 \Big\}^\frac{p_3'}{p_2'} \dots dy_N \Big\}^\frac{1}{p_N'}
\end{equation*}
by splitting each $y_j$-integral as integration over $|y_j| \leq 2^{-k\rho}$
and $|y_j| > 2^{-k\rho}$. Considering an arbitrary case of the $2^N$ possibilities, we can estimate this portion of the integral by
\begin{equation*}
\sum_{\alpha_1,\dots,\alpha_N}\Big\{ \int\dots\Big\{\int \Big\{ \int |K_{k}(x,Y)|^{p_1'} \Big|\prod_{j=1}^N 2^{-k\rho(\frac{n}{p_j}-|\alpha_j|)}y_j^{\alpha_j}\Big|^{p_1'} dy_1\Big\}^{\frac{p_2'}{p_1'}} dy_2 \Big\}^\frac{p_3'}{p_2'} \dots dy_N \Big\}^\frac{1}{p_N'},
\end{equation*}
where the sum is taken over multi-indices $\alpha_j$ (each with $n$ components) such that $|\alpha_j| = 0$ if $|y_j| \leq 2^{-k\rho}$ and $|\alpha_j| = s_j$ if $|y_j| > 2^{-k\rho}$. Without loss of generality we may assume $1 \leq p_n \leq p_{n-1} \leq \dots \leq p_1 \leq 2$, so by Theorem \ref{hausdorff-young} and the estimate \eqref{a_k bounds}, this in turn is majorised by
\begin{equation*}
\begin{aligned}
& \sum_{\alpha_1,\dots,\alpha_N} \Big\{ \int\dots\Big\{\int \Big\{ \int |\prod_{j=1}^N 2^{-k\rho(\frac{n}{p_j}-|\alpha_j|)} \partial_{\xi_j}^{\alpha_j} a_{k}(x,\Xi)|^{p_1} d\xi_1\Big\}^{\frac{p_2}{p_1}} d\xi_2 \Big\}^\frac{p_3}{p_2} \dots d\xi_N \Big\}^\frac{1}{p_N} \\
& \lesssim 2^{k\left(m - (\rho - 1) \sum_{j=1}^N \frac{n}{p_j}\right)},
\end{aligned}
\end{equation*}
Furthermore, once again using Theorem \ref{convolve}, we have
\begin{equation*}
\Big\{\int \frac{|u_j(x-y_j)|^{p_j}}{|\sigma_{k}^j(y_j)|^{p_j}} dy_j \Big\}^\frac{1}{p_j} \lesssim M_{p_j} (u_j)(x)
\end{equation*}
with a constant that only depends on the dimension $n$. Combining these facts with \eqref{eq2.14} yields
\begin{equation}\label{pointwiseakestim}
 |T_{a_{k}}(u_1,\dots,u_N)(x)| \lesssim 2^{k\left(m- (\rho -1)\sum_{j=1}^N\frac{n}{p_j}\right)}\prod_{j=1}^N M_{p_j}(u_j)(x)
\end{equation}

Summing in $k$, and using \eqref{pointwiseakestim2} and \eqref{pointwiseakestim}, we obtain
\begin{equation*}
\begin{split}
|T_a (u_1,\dots,u_N)(x)|& \lesssim \sum_{k=0}^{\infty}|T_{a_{k}}(u_1,\dots,u_N)(x)| \\
& \lesssim \sum_{k=1}^{\infty}2^{k\left(m- (\rho -1)\sum_{j=1}^N\frac{n}{p_j}\right)} \prod_{j=1}^N M_{p_j}(u_j)(x).
\end{split}
\end{equation*}
We observe that the series above converges if $m< (\rho - 1)\sum_{j=1}^N\frac{n}{p_j}$. This proves \eqref{pointws} and, with it, the theorem.
\end{proof}

We remark in passing that the case $p_1 = p_2 = \dots = p_N$ Theorem \ref{multi_one} follows from its linear predecessor, that is Theorem 3.3 in \cite{MRS1}, with $\R^{nN}$ replacing $\R^n$.

\section{Boundedness of linear operators in mixed norm spaces}\label{mixed norms}

In this section we show that a modification of the proof of Theorem \ref{multi_one} yields a mixed norm boundedness result for a class of linear pseudodifferential operators. Let $X = (x_1,\dots,x_N) \in \R^{nN}$ with $x_j \in \R^n$ for $j = 1,\dots,N.$ To a symbol $a\in L^\infty S^m_\rho(nN,1),$ we associate a linear operator $T_{a},$ a-priori defined on functions $u$ in $\mathscr{S}(\R^{nN}),$ given by
\begin{equation*} 
T_a (u)(X):=\int_{\R^{nN}} a(X,\Xi) \widehat{u}(\Xi) e^{i\langle X,\Xi\rangle} d\Xi,
\end{equation*}
where $\widehat{u}$ is the Fourier transform of $u$ in $\R^{nN}$:
\begin{equation} \label{ftN}
\widehat{u}(\Xi) = \int_{\R^{nN}} u(X) e^{i\langle X,\Xi\rangle} dX.
\end{equation}
We define the space $L^{p_N}L^{p_{N-1}}\dots L^{p_1}$ to be the mixed norm space which is the closure of $u \in \mathscr{S}(\R^{nN})$ in the norm
\[
\|u\|_{L^{p_N}L^{p_{N-1}}\dots L^{p_1}} = \Big\{ \int\dots\Big\{\int \Big\{ \int |u(X)|^{p_1} dx_1\Big\}^{\frac{p_2}{p_1}} dx_2 \Big\}^\frac{p_3}{p_2} \dots dx_N \Big\}^\frac{1}{p_N}.
\]
We will also need the following notation. For a function $u \colon \R^{nN} \to \C$ we define $M_p^{(j)}(u)$ to be the $L^p$ maximal function acting only in the $x_j$-variables. That is,
\[
M_p^{(j)}(u)(X) := \sup_{B\ni x_j} \left(\q \int_{B} \vert u(x_1,\dots,x_{j-1},y,x_{j+1},\dots,x_N)\vert^p \, dy\right)^\frac{1}{p},
\]
for $j=1,\dots,N$, where the supremum is taken over balls $B$ in $\R^n$ containing $x_j$.
\begin{thm} \label{multi_two}
Fix $p_j \in [1,2]$ for $j=1,\dots,N$ and let $a \in L^\infty S^m_\rho(nN,1)$ with $0 \leq \rho \leq 1$ and $m < (\rho - 1)\sum_{j=1}^N\frac{n}{p_j}$. Then there
exists a constant $C$, depending only on $n$, $p_j$, $m$, $\rho$ and a finite number of the constants $C_\alpha$ in Definition
$\ref{def2}$, such that
\begin{equation} \label{ptmn}
|T_a(u)(X)|\leq C M_{p_N}^{(N)}(\dots M_{p_2}^{(2)}(M_{p_1}^{(1)}(u)))(X),
\end{equation}
for all $X \in \R^{nN}$. Consequently, $T_a$ is a bounded operator from $L^{q_1}L^{q_2} \dots L^{q_N}$ to $L^{q_N}L^{q_{N-1}} \dots L^{q_1}$ whenever $q_j > p_j$ $($$j=1,\dots,N$$)$ and $1 \leq q_N \leq q_{N-1} \leq \dots \leq q_1 \leq 2$.
\end{thm}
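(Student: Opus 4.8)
The plan is to run the proof of Theorem \ref{multi_one} with $\R^{nN}$ in place of $\R^n$ and a single input $u$ in place of the $N$ inputs $u_1,\dots,u_N$. The only structural change is that the product $\prod_j M_{p_j}(u_j)$ produced there must be replaced by the iterated operator $M_{p_N}^{(N)}\cdots M_{p_1}^{(1)}(u)$, because the single function $u(X-Y)$ couples all the variables $y_1,\dots,y_N$ and so does not split as a tensor product the way $\prod_j u_j(x-y_j)$ did. I would begin with the Littlewood--Paley decomposition $a=\sum_{k\geq0}a_k$, $a_k(X,\Xi)=a(X,\Xi)\varphi_k(\Xi)$ (now with $\varphi_k$ a partition of unity on $\R^{nN}$), note that $a\in L^\infty S^m_\rho(nN,1)$ yields the same symbol bounds \eqref{a_k bounds} with $\Xi\in\R^{nN}$ and a single multi-index, and write $T_{a_k}(u)(X)=\int_{\R^{nN}}K_k(X,Y)\,u(X-Y)\,dY$ with $K_k(X,Y)=\int_{\R^{nN}}a_k(X,\Xi)e^{i\langle Y,\Xi\rangle}\,d\Xi$ and $X-Y=(x_1-y_1,\dots,x_N-y_N)$.

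For the low-frequency piece the analogue of \eqref{kernel a_0 estim} gives the factorised bound $|K_0(X,Y)|\lesssim\prod_{j=1}^N\langle y_j\rangle^{-L}$ for all $L$. Fixing $L>n$ and applying Theorem \ref{convolve} successively in $y_1,y_2,\dots,y_N$ (each $\langle y_j\rangle^{-L}$ being integrable, radial and non-increasing on $\R^n$) turns the convolution into $M_1^{(N)}\cdots M_1^{(1)}(|u|)(X)$, which is at most $M_{p_N}^{(N)}\cdots M_{p_1}^{(1)}(u)(X)$ since $M_1^{(j)}\leq M_{p_j}^{(j)}$. The iterated structure appears automatically here precisely because the bound for $K_0$ is a product of one-variable factors.

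The high-frequency pieces are where the work lies. Following Theorem \ref{multi_one}, I would write $u(X-Y)=\bigl(\prod_j\sigma_k^j(y_j)\bigr)^{-1}u(X-Y)\cdot\prod_j\sigma_k^j(y_j)$ with the weights $\sigma_k^j$ from that proof, and apply the generalised Hölder inequality for the mixed-norm spaces (in the $y$-variables, innermost $y_1$) to obtain
\[
|T_{a_k}(u)(X)|\leq\Bigl\|K_k(X,\cdot)\prod_j\sigma_k^j\Bigr\|_{L^{p_N'}\cdots L^{p_1'}}\cdot\Bigl\|\frac{u(X-\cdot)}{\prod_j\sigma_k^j}\Bigr\|_{L^{p_N}\cdots L^{p_1}}.
\]
The first factor is identical to the kernel term in the proof of Theorem \ref{multi_one}: after arranging $1\leq p_N\leq\cdots\leq p_1\leq2$ and pulling each $\sigma_k^j$ (which depends only on $y_j$) into the innermost integral, Theorem \ref{hausdorff-young} together with \eqref{a_k bounds} bounds it by $2^{k(m-(\rho-1)\sum_jn/p_j)}$. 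The main obstacle is the second factor: since $u(X-Y)$ does not factorise, I cannot split it into a product of one-variable norms as in Theorem \ref{multi_one}. Instead I would evaluate the nested norm from the inside out, using at the $y_j$-integration that $\||\sigma_k^j|^{-p_j}\|_{L^1}$ is finite and uniform in $k$ (the computation in the proof of Theorem \ref{multi_one}, valid for $s_j>n/p_j$) and Theorem \ref{convolve} to replace $\int|v|^{p_j}|\sigma_k^j|^{-p_j}\,dy_j$ by $M_{p_j}^{(j)}$ applied to $v$; iterating through $j=1,\dots,N$ converts the whole factor into $M_{p_N}^{(N)}\cdots M_{p_1}^{(1)}(u)(X)$. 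Thus $|T_{a_k}(u)(X)|\lesssim2^{k(m-(\rho-1)\sum_jn/p_j)}M_{p_N}^{(N)}\cdots M_{p_1}^{(1)}(u)(X)$.

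Summing over $k\geq0$ and using $m<(\rho-1)\sum_jn/p_j$ gives the geometric convergence and hence \eqref{ptmn}. The mapping property I would then deduce from \eqref{ptmn} by establishing that the iterated operator $M_{p_N}^{(N)}\cdots M_{p_1}^{(1)}$ is bounded between the mixed-norm spaces in the statement. This I would carry out one variable at a time, using Minkowski's integral inequality — and here the hypothesis $1\leq q_N\leq\cdots\leq q_1\leq2$ is essential — to bring the $x_j$-norm next to the factor $M_{p_j}^{(j)}$ acting in $x_j$, and then invoking the scalar bound of Theorem \ref{maxweight}, which applies because $q_j>p_j$ forces the constant weight into $A_{q_j/p_j}$. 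The interchanges require some care to keep every intermediate exponent at least $1$, but this is routine given the ordering; it is this bookkeeping, rather than any new idea, that I expect to be the most delicate part of the deduction.
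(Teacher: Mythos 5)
Your proposal is correct and follows essentially the same route as the paper: the same Littlewood--Paley decomposition, the same mixed-norm H\"older splitting with the weights $\sigma_k^j$, Theorem \ref{hausdorff-young} for the kernel factor, iterated applications of Theorem \ref{convolve} to produce $M_{p_N}^{(N)}\cdots M_{p_1}^{(1)}(u)$, and finally Minkowski's inequality together with Theorem \ref{maxweight} for the mixed-norm mapping property. The paper states this argument very tersely (essentially just recording the analogue \eqref{eq4.2} of \eqref{eq2.14}), whereas you have spelt out the details it leaves implicit, but there is no difference in substance.
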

\begin{proof}
We repeat the prove of Theorem \ref{multi_one}, but with the linear operator above and observe that \eqref{eq2.14} is replaced by
\begin{equation}\label{eq4.2}
\begin{aligned}
& |T_{a_{k}}(u)(X)| \\
& \leq
\Big\{ \int\dots\Big\{\int \Big\{ \int |K_{k}(X,Y)|^{p_1'} \prod_{j=1}^N|\sigma_{k}^j(y_j)|^{p_1'} dy_1\Big\}^{\frac{p_2'}{p_1'}} dy_2 \Big\}^\frac{p_3'}{p_2'} \dots dy_N \Big\}^\frac{1}{p_N'} \times \\
& \Big\{\int \dots \Big\{\int \Big\{\int |u(X-Y)|^{p_1} \frac{dy_1}{|\sigma_k^1(y_1)|^{p_1}}\Big\}^\frac{p_2}{p_1} \frac{dy_2}{|\sigma_k^2(y_2)|^{p_2}}\Big\}^\frac{p_3}{p_2} \dots \frac{dy_N}{|\sigma_k^N(y_N)|^{p_N}}\Big\}^\frac{1}{p_N}.
\end{aligned}
\end{equation}
We control the first factor on the right-hand side of \eqref{eq4.2} as before. To control the second factor in \eqref{eq4.2} we can use Theorem \ref{convolve} to show that it is majorised by
\[
M_{p_N}^{(N)}(\dots M_{p_2}^{(2)}(M_{p_1}^{(1)}(u)))(X).
\]
By combining these estimates we obtain \eqref{ptmn}. Using the boundedness of the maximal function and Minkowski's inequality repeatedly, we obtain the boundedness of $T_a$ on the mixed norm space defined above.
\end{proof}

\section{The class $L^p_\mu S^m_\rho$ and an application to the boundedness of smooth bilinear operators} \label{LpSmrho and applications}

We now consider linear pseudodifferential operators acting on functions on $\R^n$.

\begin{defi} \label{def5}
Let $\mu$ be a weight $($that is, a non-negative function$)$, and $1 \leq p \leq \infty$, $m \in \R$ and $\rho \leq 1$ be parameters. A symbol $a \colon \R^n \times \R^{n} \to \C$ belongs to the class $L^p_\mu S^m_{\rho}$ if for each multi-index $\alpha \in \Z^{n}$ there exists a constant $C_\alpha$ such that
\begin{equation*} \label{symr}
\|\partial_\xi^\alpha a(\cdot,\xi)\|_{L^p_\mu(\R^n)} \leq C_\alpha \langle \xi\rangle^{m-\rho |
\alpha|}.
\end{equation*}
When the weight $\mu\equiv 1$ then we use the notation $L^pS^m_\rho$ for $L^p_1 S^m_\rho$.
\end{defi}

\begin{thm} \label{Lrsym}
Suppose $r \in [1,\infty)$, $q \in (1,\infty]$ and $p \in [2,\infty)$ with conjugate $p'$ $($for which $1/p + 1/p' = 1$$)$ satisfy the relation $1/r = 1/q + 1/p$. Suppose further that $\rho \leq 1$ and $m < n(\rho-1)/p'$. Let $a \in L^p_{\mu}S^m_\rho$ and $\mu$ and $w$ be weights with $w \in A_{q/p'}$. Then there exists a constant $C$, depending only on $n$, $m$, $\rho$, $p$, $q$, $[w]_{A_{q/p'}}$ and a finite number of $C_\alpha$ from $\mathrm{Definition}$ $\ref{def5}$, such that
\begin{enumerate}

\item[$(i)$]  \label{weightedLp} if $r \neq 1$ and $q \neq \infty$, then
\[
\|T_a(u)\|_{L^r_{\nu}(\R^n)} \leq C\|u\|_{L^q_w(\R^n)},
\]
where $\nu = \mu^{r/p} w^{r/q}$; and

\item[$(ii)$] \label{unweightedLp} if $r = 1$ or $q = \infty$, then
\[
\|T_a(u)\|_{L^r_{\nu}(\R^n)} \leq C\|u\|_{L^q(\R^n)},
\]
where $\nu = \mu^{r/p}$.
\end{enumerate}
\end{thm}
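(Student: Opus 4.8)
The plan is to adapt the argument of Theorem \ref{multi_one} to the linear, single-variable setting, treating the spatial $L^p_\mu$-norm on the symbol as the mechanism that produces the weight $\mu^{r/p}$ in the target space. First I would carry out the Littlewood-Paley decomposition $a = a_0 + \sum_{k\geq 1} a_k$ with $a_k(x,\xi)=a(x,\xi)\varphi_k(\xi)$, and write $T_{a_k}(u)(x) = \int K_k(x,y)\,u(x-y)\,dy$ where $K_k(x,y)=\int a_k(x,\xi)e^{iy\cdot\xi}\,d\xi$. Because the symbol now carries only an $L^p_\mu$ bound in $x$, I cannot obtain a pointwise-in-$x$ kernel estimate; instead I would estimate in the dual exponent $p'$ in the frequency variable. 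Introducing the weight $\sigma_k(y)$ defined exactly as in the proof of Theorem \ref{multi_one} (with the single exponent $p'$ in place of the $p_j'$), Hölder's inequality in the $y$-integral splits $|T_{a_k}(u)(x)|$ into a kernel factor $\big(\int |K_k(x,y)|^{p'}|\sigma_k(y)|^{p'}\,dy\big)^{1/p'}$ and a maximal-function factor $\big(\int |u(x-y)|^p/|\sigma_k(y)|^p\,dy\big)^{1/p}$. The second factor is controlled by $M_{p}(u)(x)$ via Theorem \ref{convolve}, as before.

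For the kernel factor I would apply the one-dimensional Hausdorff-Young inequality in $\xi$ to pass from $\big(\int |K_k(x,y)|^{p'}|\sigma_k(y)|^{p'}\,dy\big)^{1/p'}$ to a quantity controlled by $\big(\int |\partial_\xi^\alpha a_k(x,\xi)|^p\,d\xi\big)^{1/p}$, again splitting the $y$-integral over $|y|\leq 2^{-k\rho}$ and $|y|>2^{-k\rho}$ and inserting the corresponding powers of $y$. The crucial difference from Theorem \ref{multi_one} is that this leaves a factor depending on $x$, namely the $L^p$-norm in $\xi$ of $\partial_\xi^\alpha a_k(x,\cdot)$; I would not bound it pointwise but retain it as a function of $x$. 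Using the defining estimate $\|\partial_\xi^\alpha a(\cdot,\xi)\|_{L^p_\mu}\leq C_\alpha\langle\xi\rangle^{m-\rho|\alpha|}$ together with Minkowski's integral inequality to interchange the $\xi$- and $x$-integrations, I expect the bound
\[
\Big\| \Big(\int |\partial_\xi^\alpha a_k(\cdot,\xi)|^p\,d\xi\Big)^{1/p}\Big\|_{L^p_\mu} \lesssim 2^{k(n/p + m - \rho|\alpha|)},
\]
which after choosing $|\alpha|=s>n/p'$ large enough yields $\|g_k\|_{L^p_\mu}\lesssim 2^{k(m-(\rho-1)n/p')}$ for the $x$-dependent kernel factor $g_k$. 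Thus the pointwise bound of the multilinear case is replaced by $|T_{a_k}(u)(x)|\lesssim g_k(x)\,M_p(u)(x)$ with $\|g_k\|_{L^p_\mu}$ summable precisely when $m<n(\rho-1)/p'$.

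To conclude, I would sum in $k$ and split the norm using the relation $1/r = 1/q + 1/p$. Applying Hölder's inequality in $L^r_\nu$ with exponents $p/r$ and $q/r$ (legitimate since $1/r=1/p+1/q$), the factor $g_k$ is measured in $L^p_\mu$ and the factor $M_p(u)$ in $L^q_w$; here the weight $\nu=\mu^{r/p}w^{r/q}$ arises naturally, and $M_p$ is bounded on $L^q_w$ by Theorem \ref{maxweight} precisely under the hypothesis $w\in A_{q/p'}$ (equivalently $w\in A_{(q/p)\cdot\text{(appropriate index)}}$, which I would verify is the correct index). Summing the geometric series $\sum_k 2^{k(m-n(\rho-1)/p')}$ gives part $(i)$. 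For part $(ii)$, when $r=1$ or $q=\infty$ the factorisation degenerates and one of the two exponents becomes trivial, so no $A_p$ condition or $w$-weight is needed and $\nu=\mu^{r/p}$; I would treat this as a direct specialisation of the same estimate. The main obstacle I anticipate is the interchange-of-integration step: keeping careful track of whether Minkowski's inequality applies (which requires $p\geq 1$, satisfied since $p\geq 2$) and confirming that the Hausdorff-Young application in the single variable produces exactly the exponent $n/p$ rather than $n/p'$, so that the summability threshold comes out as $m<n(\rho-1)/p'$ as claimed.
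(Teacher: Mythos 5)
Your overall architecture --- Littlewood--Paley decomposition, the weights $\sigma_k$, retaining the $x$-dependent kernel factor $g_k$, and a final H\"older step with exponents $p/r$ and $q/r$ --- is exactly the paper's, but your H\"older split in the $y$-integral is oriented the wrong way round, and this breaks two essential steps. You place the kernel factor in $L^{p'}(dy)$ and the $u$-factor in $L^{p}(dy)$. The Hausdorff--Young inequality you then need would bound $\bigl(\int |y^\alpha K_k(x,y)|^{p'}\,dy\bigr)^{1/p'}$ by $\bigl(\int |\partial^\alpha_\xi a_k(x,\xi)|^{p}\,d\xi\bigr)^{1/p}$ with $p\geq 2\geq p'$; that is the false direction (the Fourier transform does not map $L^p$ into $L^{p'}$ for $p>2$), and any workaround using the compact $\xi$-support of $a_k$ forces you back to measuring the symbol in $L^{p'}_\xi$, i.e.\ to the paper's orientation. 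Second, your orientation produces the maximal function $M_p(u)$, and Theorem \ref{maxweight} says $M_p$ is bounded on $L^q_w$ if and only if $w\in A_{q/p}$ (which in particular forces $q>p$), \emph{not} under the stated hypothesis $w\in A_{q/p'}$; your parenthetical ``which I would verify is the correct index'' is precisely where this fails. The mismatch is not cosmetic: the theorem admits exponents such as $p=4$, $q=2$, $r=4/3$ with $w\equiv 1$, and $M_4$ is not bounded on $L^2$, so your final H\"older step cannot be executed at all there. (There is also a normalisation inconsistency: with $\sigma_k$ tuned to $n/p'$ but $u$ measured in $L^p$, the constant from Theorem \ref{convolve} is $\bigl\| |\sigma_k|^{-p}\bigr\|_{L^1}^{1/p}\sim 2^{k\rho n(1/p'-1/p)}$, which grows in $k$.)

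The paper's proof is your argument with the two exponents exchanged, and that exchange is what makes every step legitimate: the kernel factor is taken in $L^p(dy)$, so Hausdorff--Young is applied in the valid direction $L^{p'}_\xi\to L^p_y$ (since $p'\leq 2$); the $u$-factor becomes $\bigl(\int |u(x-y)|^{p'}|\sigma_k(y)|^{-p'}dy\bigr)^{1/p'}\lesssim M(|u|^{p'})^{1/p'}(x)$, whose boundedness on $L^q_w$ is exactly the hypothesis $w\in A_{q/p'}$ (note $q>p'$ in case $(i)$); and the retained factor $\bigl(\int |\partial^\alpha_\xi a_k(x,\xi)|^{p'}d\xi\bigr)^{1/p'}$ has its $L^p_\mu(dx)$-norm controlled by Minkowski's inequality (using $p/p'\geq 1$) and Definition \ref{def5}, giving $2^{k(m-\rho|\alpha|+n/p')}$ and hence $2^{k(m-n(\rho-1)/p')}$ per dyadic piece --- which is where the threshold $m<n(\rho-1)/p'$ genuinely comes from, not from an exponent $n/p$. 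Finally, case $(ii)$ with $r=1$ is not a ``direct specialisation'': there $q=p'$, the maximal-function step degenerates ($M_{p'}$ is not bounded on $L^{p'}$), and the paper instead takes the $L^1_\nu$-norm first and applies Young's inequality to the $\sigma_k$-factor.
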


\begin{proof}
First, we define
\begin{equation*}
\sigma_{k}(y)=\begin{cases}
2^{\frac{-k\rho n}{p'}}, & |y| \leq 2^{-k\rho}; \\
2^{-k\rho(\frac{n}{p'}-\ell)}|y|^{\ell}, & |y| >
2^{-k\rho}.
\end{cases}
\end{equation*}
It is then easy to check that
\[
\left(\int |\sigma_k(y)|^{-p'}dy \right)^\frac{1}{p'} \lesssim 1
\]
provided $\ell$ is sufficiently large.
We consider the Littlewood-Paley pieces $T_{a_k}$ of the operator $T_{a}$, where $a_k(x,\xi) = a(x,\xi)\varphi_k(\xi)$. Using H\"older's inequality, the Hausdorff-Young inequality and Theorem \ref{convolve}, we compute
\begin{equation} \label{5.1}
\begin{aligned}
|T_{a_k}(u)(x)| & = \left| \int K_k(x,y)u(x-y)dy\right| \\
& \leq \left(\int |K_k(x,y)\sigma_k(y)|^p dy\right)^\frac{1}{p}\left(\int \left|\frac{u(x-y)}{\sigma_k(y)}\right|^{p'} dy\right)^\frac{1}{p'} \\
& \lesssim \sum_{|\alpha| \leq \ell} 2^{-k(n\rho/p' - |\alpha|\rho)}\left(\int |\partial_\xi^\alpha a_k(x,\xi)|^{p'} d\xi\right)^\frac{1}{p'}\left(\int \left|\frac{u(x-y)}{\sigma_k(y)}\right|^{p'} dy\right)^\frac{1}{p'} \\
& \lesssim \sum_{|\alpha| \leq \ell} 2^{-k(n\rho/p' - |\alpha|\rho)}\left(\int |\partial_\xi^\alpha a_k(x,\xi)|^{p'} d\xi\right)^\frac{1}{p'} M(u^{p'})^\frac{1}{p'}(x).
\end{aligned}
\end{equation}
Consequently, under the hypotheses of $(i)$, H\"older's inequality with exponents $p/r$ and $q/r$, the weighted boundedness of the Hardy-Littlewood maximal function and Minkowski's inequality show us that
\begin{equation} \label{5.2}
\begin{aligned}
& \|T_{a_k}(u)\|_{L^r_{\nu}(\R^n)} \\
& = \left(\int |T_{a_k}(u)(x)|^r \mu(x)^\frac{r}{p} w(x)^\frac{r}{q}dx\right)^\frac{1}{r} \\
& \lesssim \left(\int \sum_{|\alpha| \leq \ell} 2^{-kr(n\rho/p' - |\alpha|\rho)}\left(\int |\partial_\xi^\alpha a_k(x,\xi)|^{p'} d\xi\right)^\frac{r}{p'} M(u^{p'})^\frac{r}{p'}(x) \mu(x)^\frac{r}{p} w(x)^\frac{r}{q}dx\right)^\frac{1}{r} \\
& \lesssim \sum_{|\alpha| \leq \ell} 2^{-k(n\rho/p' - |\alpha|\rho)}\left(\int \left(\int |\partial_\xi^\alpha a_k(x,\xi)|^{p'} d\xi\right)^\frac{p}{p'} \mu(x)dx\right)^\frac{1}{p} \|u\|_{L^q_w(\R^n)} \\
& \lesssim \sum_{|\alpha| \leq \ell} 2^{-k(n\rho/p' - |\alpha|\rho)}\left(\int \left(\int |\partial_\xi^\alpha a_k(x,\xi)|^{p}\mu(x)dx\right)^\frac{p'}{p} d\xi \right)^\frac{1}{p'} \|u\|_{L^q_w(\R^n)} \\
& \lesssim  2^{k(m -n(\rho - 1)/p')}\|u\|_{L^q_w(\R^n)}
\end{aligned}
\end{equation}
We can then sum in $k$ to find that
\[
\|T_a(u)\|_{L^r_{\nu}(\R^n)} \leq \sum_{k=0}^\infty \|T_{a_k}(u)\|_{L^r_{\nu}(\R^n)} \lesssim 2^{k(m - n(\rho-1)/p')}\|u\|_{L^q_w(\R^n)} \lesssim \|u\|_{L^q_w(\R^n)},
\]
which completes the proof of $(i)$.

The proof of $(ii)$ is similar. When $r = 1$, $p' = q$, so we cannot use the boundedness of the Hardy-Littlewood maximal function in \eqref{5.2}. So before applying Theorem \ref{convolve} in \eqref{5.1}, we instead first take the $L^1_\nu(\R^n)$ norm of the inequality, apply H\"older's inequality as before, and then apply Young's inequality to the factor involving $\sigma_k$. When $q = \infty$, $r = p \in [2,\infty)$, so once again we cannot use the weighted boundedness of the Hardy-Littlewood maximal function, but we can use its boundedness on $L^\infty(\R^n)$.
\end{proof}

As an application of Theorem \ref{Lrsym} we now establish our main result regarding the boundedness of smooth bilinear pseudodifferential operators that fall outwith the scope of the bilinear Calder\'on-Zygmund theory. We shall prove a further result concerning a subclass of these operators in Section \ref{outwith}.
\begin{defi}\label{bilinear multiplier defn}
Given $n,N \in \N$, $m\in \mathbb{R}$ and $\rho, \delta\in [0,1]$, a symbol $a \colon \R^n \times \R^{Nn} \to \C$ belongs to the class $S^{m}_{\rho,\delta}(n,N)$ if, for each pair of multi-indices $\alpha \in \Z^{Nn}$, and $\beta \in \Z^{n}$, there exists a constant $C_{\alpha,\beta}$ such that
\begin{equation*}
|\partial_\Xi^\alpha \partial_x^\beta a(x,\Xi)| \leq C_{\alpha,\beta} \langle\Xi\rangle^{m-\rho|\alpha|+\delta|\beta|}.
\end{equation*}
In particular, a symbol $a \colon \R^n \times \R^n \times \R^{n} \to \C$ belongs to the class $S^{m}_{\rho,\delta}(n,2)$ if, for each triple of multi-indices $\alpha\in \Z^{n},$ $\beta\in \Z^{n}$ and $\gamma\in \Z^{n}$, there exists a constant $C_{\alpha, \beta, \gamma}$ such that
\begin{equation*}
|\partial_\xi^\alpha \partial_\eta^\beta \partial_{x}^{\gamma} a(x,\xi,\eta)| \leq C_{\alpha, \beta, \gamma} (1+ |\xi|^2 +|\eta|^2)^{(m-\rho(|\alpha|+|\beta|)+\delta|\gamma|)/2}.
\end{equation*}
\end{defi}

Define the adjoint operators $T^{*1}$ and $T^{*2}$ of a bilinear operator $T$ via the identities $\langle T(f,g),h\rangle_{p} = \langle f,T^{*1}(h,g)\rangle_{p} = \langle g,T^{*2}(f,h)\rangle_{p}$, where $\langle \cdot,\cdot\rangle_{p}$ denotes the dual pairing on $L^p$ ($1\leq p < \infty$). We will use Theorem 1 in \cite{BMNT} repeatedly, so we record it here.

\begin{thm} \label{1in2}
Assume that $0 \leq \delta \leq \rho \leq 1$, $\delta < 1$ and $a \in S^m_{\rho,\delta}(n,2)$. Then $T_a^{*j} = T_{b_j}$ for some $b_j \in S^m_{\rho,\delta}(n,2)$ and $j=1,2$.
\end{thm}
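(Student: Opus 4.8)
The plan is to treat Theorem \ref{1in2} as a statement in bilinear symbolic calculus and to prove it by writing each transpose symbol as an oscillatory integral built from $a$, then extracting an asymptotic expansion whose terms lie in a decreasing scale of H\"ormander classes. I will only sketch the case $j=1$, since $j=2$ follows by interchanging the roles of the two frequency variables $\xi,\eta$.

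First I would pass from operators to kernels. Writing $T_a(f,g)(x)=\int_{\R^{2n}} K(x,y,z)f(y)g(z)\,dy\,dz$ with $K(x,y,z)=\int_{\R^{2n}}a(x,\xi,\eta)e^{i(x-y)\cdot\xi}e^{i(x-z)\cdot\eta}\,d\xi\,d\eta$, the defining identity $\langle T_a(f,g),h\rangle_{p}=\langle f,T^{*1}(h,g)\rangle_{p}$ shows that $T^{*1}$ has kernel $K(y,x,z)$, that is, the first two kernel arguments are interchanged. Substituting the Fourier representations of $g$ and $h$ and carrying out the $z$- and $y$-integrations, I would arrive at $T^{*1}(h,g)(x)=\int_{\R^{2n}}b_1(x,\zeta,\eta)\,\widehat{h}(\zeta)\widehat{g}(\eta)e^{ix\cdot(\zeta+\eta)}\,d\zeta\,d\eta$ with the transpose symbol given by the oscillatory integral
\[
b_1(x,\zeta,\eta)=(2\pi)^{-n}\int_{\R^{2n}}a(x+w,\sigma-\zeta-\eta,\eta)\,e^{iw\cdot\sigma}\,dw\,d\sigma.
\]

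Next I would analyse this integral. Its leading behaviour is governed by the value at $w=0$, $\sigma=0$, namely $a(x,-\zeta-\eta,\eta)$, which is obtained from $a$ by the linear change of frequency $(\xi,\eta)\mapsto(-\zeta-\eta,\eta)$. Since $\langle(-\zeta-\eta,\eta)\rangle\approx\langle(\zeta,\eta)\rangle$ and the chain rule expresses $\partial_\zeta,\partial_\eta$ of the composite in terms of $\partial_\xi,\partial_\eta$ of $a$, the map $a(x,\xi,\eta)\mapsto a(x,-\zeta-\eta,\eta)$ preserves $S^m_{\rho,\delta}(n,2)$. For the oscillatory integral itself I would Taylor-expand $a(x+w,\,\cdot\,,\eta)$ in $\sigma$ about $\sigma=0$ and apply Fourier inversion in $(w,\sigma)$, producing the formal expansion
\[
b_1(x,\zeta,\eta)\sim\sum_{\alpha}\frac{1}{\alpha!}\,\partial_\xi^\alpha\partial_x^\alpha\,a(x,-\zeta-\eta,\eta)
\]
(up to harmless factors of $i$). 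By Definition \ref{bilinear multiplier defn} the $\alpha$-th term satisfies the estimates of $S^{m-(\rho-\delta)|\alpha|}_{\rho,\delta}(n,2)$: each $\partial_\xi$ contributes a factor $\langle\Xi\rangle^{-\rho}$ while each $\partial_x$ costs a factor $\langle\Xi\rangle^{\delta}$. Since $\delta\leq\rho$ the orders are non-increasing, the $\alpha=0$ term already lies in $S^m_{\rho,\delta}(n,2)$, and the series is a genuine asymptotic expansion, so that $b_1\in S^m_{\rho,\delta}(n,2)$; by symmetry $b_2\in S^m_{\rho,\delta}(n,2)$ as well.

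The main obstacle is the rigorous justification of the oscillatory-integral manipulation and the control of the remainder $b_1-\sum_{|\alpha|<M}(\cdots)$ in the symbol norms, rather than the bookkeeping of orders. One must regularise the non-absolutely-convergent double integral (for instance by inserting a cutoff $\chi(\eps w,\eps\sigma)$ and letting $\eps\to0$), integrate by parts in $w$ and in $\sigma$ to manufacture decay, and then estimate the Taylor remainder uniformly in all the parameters. It is precisely here that the hypothesis $\delta<1$ is indispensable: it guarantees that differentiating the symbol in $x$ does not overwhelm the decay produced by integration by parts, so that the remainder integrals converge and obey the required bounds. The borderline case $\rho=\delta$, in which no order is gained term-by-term, is the most delicate and forces one to close the argument through the remainder estimate rather than through convergence of the series.
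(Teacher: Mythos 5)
First, a point of reference: the paper does not prove this statement at all --- it is imported verbatim as Theorem 1 of \cite{BMNT} (``We will use Theorem 1 in \cite{BMNT} repeatedly, so we record it here''), so there is no internal proof to compare against, and your attempt has to be judged against the symbolic-calculus proof in that reference. Your setup is the correct and standard one, and it is the route the literature takes: the kernel of $T^{*1}$ is indeed $K(y,x,z)$; your oscillatory-integral formula for $b_1$ is right (substituting $x=y+w$ and $\sigma=\xi+\zeta+\eta$ in the defining identity gives exactly your expression); the substitution $(\xi,\eta)\mapsto(-\zeta-\eta,\eta)$ does preserve $S^m_{\rho,\delta}(n,2)$ because $\langle(-\zeta-\eta,\eta)\rangle\approx\langle(\zeta,\eta)\rangle$; and the formal expansion terms do have order $m-(\rho-\delta)|\alpha|$.

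The genuine gap is the sentence ``the orders are non-increasing, the $\alpha=0$ term already lies in $S^m_{\rho,\delta}(n,2)$, and the series is a genuine asymptotic expansion, so that $b_1\in S^m_{\rho,\delta}(n,2)$.'' That implication is not bookkeeping; it \emph{is} the theorem. The hypotheses allow $\rho=\delta$ (and this case is genuinely used downstream, e.g.\ Theorem \ref{smooth pseudo bilinear} permits $\delta=\rho$), and there every term of the expansion has order exactly $m$. ``Asymptotic expansion'' then means precisely that $b_1-\sum_{|\alpha|<M}(\cdots)\in S^{m-(\rho-\delta)M}_{\rho,\delta}=S^{m}_{\rho,\delta}$, which is the membership claim itself; it cannot be extracted from any convergence or order-gain argument, only from a direct symbol estimate on the oscillatory integral. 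And that estimate is not routine: the naive regularise-and-integrate-by-parts bound you invoke gives, for the zeroth derivative,
\begin{equation*}
|b_1(x,\zeta,\eta)|\lesssim\int\!\!\int \langle w\rangle^{-2M}\langle\sigma\rangle^{-2L}\,\langle(\sigma-\zeta-\eta,\eta)\rangle^{m+2L\delta}\,dw\,d\sigma\lesssim\langle(\zeta,\eta)\rangle^{m+2L\delta},
\end{equation*}
because each of the $2L$ copies of $\partial_x$ falling on $a$ costs $\langle\cdot\rangle^{\delta}$; for $\delta>0$ this overshoots the required $\langle(\zeta,\eta)\rangle^{m}$ by an arbitrarily large power. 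Recovering the correct order forces the two-region argument (splitting the $\sigma$-integration relative to $\langle(\zeta,\eta)\rangle$, Taylor-expanding $a$ in the frequency slot near $\sigma=0$ with quantitative remainder control, and balancing the $\delta$-loss from $x$-derivatives against the $\rho$-gain and the $\langle\sigma\rangle^{-2L}$ decay region by region --- the Kumano-go-type scheme, adapted to the bilinear setting in \cite{BMNT}). Your final paragraph accurately names these steps as ``the main obstacle,'' including the role of $\delta<1$ and the delicacy of $\rho=\delta$, but it describes the missing work rather than performing it. As it stands the proposal is a correct plan whose analytic core --- the only part carrying content --- is absent.
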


\begin{thm}\label{smooth pseudo bilinear}
Let $a \in S^m_{\rho,\delta}(n,2)$, with $0 \leq \delta \leq \rho \leq 1$, $\delta < 1$ and
\begin{equation} \label{stronger}
m < n(\rho - 1)\max\left\{\frac{1}{2},\left(\frac{2}{p} - \frac{1}{2}\right),\left(\frac{2}{q} - \frac{1}{2}\right),\left(\frac{3}{2} - \frac{2}{r}\right)\right\}
\end{equation}
for $p,q,r \in [1,\infty]$ $($see Figure $\ref{figure}$$)$. Then
\[
\|T_a(f,g)\|_{L^r(\R^n)} \lesssim \|f\|_{L^q(\R^n)}\|g\|_{L^p(\R^n)}
\]
for $p$,$q$ and $r$ such that $1/r = 1/q + 1/p$.
\end{thm}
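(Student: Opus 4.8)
The plan is to freeze one of the two inputs so that $T_a$ collapses to a \emph{linear} operator to which Theorem \ref{Lrsym} applies, and then to use the transposition symmetry furnished by Theorem \ref{1in2}, together with interpolation, to reach the sharp exponent in \eqref{stronger}. It is convenient to record the three ``legs'' of the estimate as the reciprocals $1/q$, $1/p$ and $1/r'=1-1/p-1/q$, which sum to $1$. By Theorem \ref{1in2} the transposes $T_a^{*1},T_a^{*2}$ again have symbols in $S^m_{\rho,\delta}(n,2)$, and the pairings $\langle T_a(f,g),h\rangle=\langle f,T_a^{*1}(h,g)\rangle=\langle g,T_a^{*2}(f,h)\rangle$ show that the desired bound is equivalent to corresponding bounds for $T_a^{*1}$ and $T_a^{*2}$ on the permuted exponents. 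Consequently each of the three legs can be realised as an \emph{input} slot of one of $T_a,T_a^{*1},T_a^{*2}$. Freezing that input (write it $g$), one has $T_a(\cdot,g)=T_b$ with $b(x,\xi)=\int_{\R^n} a(x,\xi,\eta)\,\widehat{g}(\eta)\,e^{ix\cdot\eta}\,d\eta$, and the exponent relation $1/r_0=1/q_0+1/p_0$ in Theorem \ref{Lrsym} forces the spatial-integrability exponent $p_0$ of the linear symbol $b$ to equal the frozen leg's exponent, which must therefore lie in $[2,\infty)$.

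The first technical step I would isolate is the claim that $b$ belongs to the class $L^{p_0}S^{m_0}_{\rho}$ of Definition \ref{def5} (with $\mu\equiv 1$), with
\[
m_0=m+n(1-\rho)\left(\tfrac12-\tfrac1{p_0}\right)
\qquad\text{and}\qquad
\|\partial_\xi^\alpha b(\cdot,\xi)\|_{L^{p_0}(\R^n)}\lesssim \langle\xi\rangle^{m_0-\rho|\alpha|}\,\|g\|_{L^{p_0}(\R^n)}.
\]
To see this one notes that $\partial_\xi^\alpha b(\cdot,\xi)=T_{\partial_\xi^\alpha a(\cdot,\xi,\cdot)}g$ and splits the $\eta$-frequencies at $\langle\eta\rangle\sim\langle\xi\rangle$. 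On the low part $\langle(\xi,\eta)\rangle\approx\langle\xi\rangle$, so the operator is $\langle\xi\rangle^{m-\rho|\alpha|}$ times an $L^1$-normalised approximate identity and is bounded on every $L^{p_0}$ by Theorem \ref{convolve}. On the high part $\langle\eta\rangle\gtrsim\langle\xi\rangle$ the symbol is an order-$(m-\rho|\alpha|)$ element of $S_{\rho,\delta}$, whose $L^{p_0}\to L^{p_0}$ norm carries the Fefferman--Miyachi loss $n(1-\rho)|\tfrac1{p_0}-\tfrac12|$; the decaying factor $\langle\xi\rangle^{m_0-\rho|\alpha|}$ is extracted from $\langle\eta\rangle\gtrsim\langle\xi\rangle$. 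Feeding this into the unweighted case of Theorem \ref{Lrsym} (with $w\equiv1$) bounds $T_b\colon L^q\to L^r$, hence $T_a\colon L^q\times L^p\to L^r$, provided $m_0<n(\rho-1)/p_0'$; unwinding this, a single freeze of a leg of exponent $A\ge 2$ gives boundedness exactly under
\[
m<n(\rho-1)\left(\tfrac32-\tfrac2A\right).
\]

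Writing $1/A$ in terms of the other two reciprocals shows that $\tfrac32-\tfrac2A=(2x_{\max}-\tfrac12)+2x_{k}$, where $x_{\max}=\max\{1/p,1/q,1/r'\}$ and $x_k$ is the reciprocal of the remaining leg. Thus a single freeze is sharp precisely on the \emph{edges} of the exponent simplex (where one reciprocal vanishes), and there it yields the clean condition $m<n(\rho-1)(2x_{\max}-\tfrac12)$, reducing to the floor $m<n(\rho-1)/2$ on the central face $\{p,q\ge2,\ r\le2\}$. I would then prove these edge (and vertex) estimates and fill in the interior by bilinear complex interpolation, choosing the two edge endpoints to share the \emph{same} leg coordinate $x_{\max}$ so that both carry the identical condition; interpolation then delivers the interior point under that same condition. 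The result is boundedness whenever $m<n(\rho-1)\max\{\tfrac12,\,2x_{\max}-\tfrac12\}$, and a direct computation using $3/2-2/r=2/r'-1/2$ identifies this with the maximum of the four quantities in \eqref{stronger}.

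The main obstacle, I expect, is the interaction of the frozen-symbol lemma with the interpolation geometry: a naive freeze overshoots everywhere except on the simplex edges, so the sharpness of \eqref{stronger} hinges on arranging the endpoints of each interpolation segment so that the excess term $2x_k$ cancels, all while correctly transferring the $\langle(\xi,\eta)\rangle$-coupled symbol bounds into a spatial $L^{p_0}$ estimate in $\langle\xi\rangle$ alone and tracking the Fefferman--Miyachi loss $n(1-\rho)|\tfrac12-\tfrac1{p_0}|$ that is intrinsic to the range $\rho<1$. It is exactly this loss, together with the constraint $p_0\ge2$ and the three choices of which leg to freeze, that produces the four-term structure of the hypothesis.
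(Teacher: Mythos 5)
Your skeleton coincides with the paper's: permute the three legs with Theorem \ref{1in2}, freeze one input so that $T_a$ becomes a linear operator with symbol in a class $L^{p_0}S^{m_0}_\rho$, apply Theorem \ref{Lrsym}, and interpolate over the triangle; your single-freeze threshold $m<n(\rho-1)\left(\tfrac{3}{2}-\tfrac{2}{A}\right)$ is exactly the paper's per-block budget $m_1+m_2$ with $m_1<n(\rho-1)/p'$ and $m_2\leq n(\rho-1)(1/p'-1/2)$. The genuine gap is in your proof of the frozen-symbol lemma, on the low part $\{\langle\eta\rangle\lesssim\langle\xi\rangle\}$. There the symbol is supported in an $\eta$-ball of radius $\sim\langle\xi\rangle$ while each $\eta$-derivative gains only $\langle(\xi,\eta)\rangle^{-\rho}\approx\langle\xi\rangle^{-\rho}$, so the kernel $K(x,z)=\int_{\langle\eta\rangle\lesssim\langle\xi\rangle}\partial_\xi^\alpha a(x,\xi,\eta)\,e^{iz\cdot\eta}\,d\eta$ satisfies only $|K|\lesssim\langle\xi\rangle^{m-\rho|\alpha|+n}(1+\langle\xi\rangle^{\rho}|z|)^{-M}$, whose $L^1_z$-mass is $\langle\xi\rangle^{m-\rho|\alpha|+n(1-\rho)}$. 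This piece is an $L^1$-normalised approximate identity only when $\rho=1$; for $\rho<1$, which is the whole point of the theorem, Theorem \ref{convolve} delivers $m_0=m+n(1-\rho)$ rather than your claimed $m+n(1-\rho)\left(\tfrac12-\tfrac1{p_0}\right)$. Fed into Theorem \ref{Lrsym}, this gives only $m<n(\rho-1)\left(2-\tfrac1{p_0}\right)$ --- at $p_0=2$, the condition $m<\tfrac{3n(\rho-1)}{2}$ instead of $m<\tfrac{n(\rho-1)}{2}$ --- and the deficit is inherited by every point produced by interpolation, so the scheme as written proves a strictly weaker statement than \eqref{stronger}.

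The failure is concentrated in the transition blocks $\langle\xi\rangle^{\rho}\lesssim\langle\eta\rangle\lesssim\langle\xi\rangle$ (blocks with $\langle\eta\rangle\lesssim\langle\xi\rangle^{\rho}$ genuinely are mass-$O(1)$ convolutions): these are just as singular as the first high blocks, and the only way to beat the kernel bound there is the Fefferman--Miyachi estimate, which you invoke only for $\langle\eta\rangle\gtrsim\langle\xi\rangle$. This is precisely what the paper's organisation is designed to avoid: it performs a double Littlewood--Paley decomposition $a_{j,k}=a\,\varphi_j(\xi)\varphi_k(\eta)$ and freezes whichever input carries the \emph{dominant} frequency --- $g$ when $k\geq j$, and $f$ (reversing $\xi\leftrightarrow\eta$, $p\leftrightarrow q$) when $k<j$ --- so that \cite[p.~322]{S} is only ever applied to a block whose derivative gain $2^{-k\rho}$ and spatial growth $2^{k\delta}$ are adapted to its own frequency $2^k$; the price is needing both $p,q\geq 2$ and the subsequent duality step. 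If you insist on a single frozen input, you must run Fefferman--Miyachi on the transition blocks too; this is plausible, since at frequency $2^k\leq\langle\xi\rangle\approx 2^j$ the gain $2^{-j\rho}\leq 2^{-k\rho}$ makes them single-scale pieces with effective exponents $\tilde\rho=j\rho/k\in[\rho,1)$ and $\tilde\delta=j\delta/k$, and $\delta\leq\rho<1$ forces $\tilde\delta\leq\tilde\rho$, $\tilde\delta<1$ --- but it requires a uniform single-scale estimate for these mismatched blocks that the proposal does not supply. Modulo that repair, your endgame is sound (indeed a single freeze then yields the edge $r=1$ for all $p\in[2,\infty)$ directly, and your constant-$x_{\max}$ chords together with chords of the central triangle fill the interior); the approximate-identity step is the one place the proof breaks.
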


\begin{figure}
\begin{center}
\input{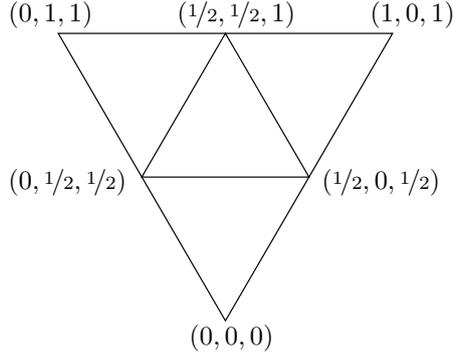}
\caption{The triangle with vertices $(0,1,1)$, $(1,0,1)$ and $(0,0,0)$ in the plane $\{(1/p,1/q,1/r) \, | \, 1/p + 1/q = 1/r\} \subset \R^3$. The right-hand side of \eqref{stronger} is linear on each of the sub-triangles depicted. It is $n(\rho - 1)/2$ at $(1/2,1/2,1)$, $(0,1/2,1/2)$ and $(1/2,0,1/2)$ and $3n(\rho - 1)/2$ at $(0,1,1)$, $(1,0,1)$ and $(0,0,0)$.} \label{figure}
\end{center}
\end{figure}
\begin{proof}
Observe that the condition $r \geq 1$ ensures that we cannot have both $p$ and $q$ less than $2$. Theorem \ref{1in2} tells us that the adjoint operators $T_a^{*1}$ and $T_a^{*1}$ are operators in the same class as $T_a$. Considering these adjoint operators if necessary, we can reduce the proof of the theorem to the special case $p,q\geq 2$.

We consider a Littlewood-Paley partition of unity $\{\varphi_{k}\}_{k\geq 0}$ defined as in \eqref{lp} with $N=1.$ We then set $a_{j,k}(x,\xi,\eta) = a(x,\xi,\eta)\varphi_j(\xi)\varphi_k(\eta)$, so $a(x,\xi,\eta) = \sum_{j,k=0}^{\infty} a_{j,k}(x,\xi,\eta)$. We can write
\begin{equation}
\begin{aligned} \label{iteration}
T_{a_{j,k}}(f,g)(x) & = \int \int a_{j,k}(x,\xi,\eta) \widehat{f}(\xi) \widehat{g}(\eta) e^{ix\cdot(\xi + \eta)} d\xi d\eta \\
& = \int \left( \int a_{j,k}(x,\xi,\eta)  \widehat{g}(\eta) e^{ix\cdot\eta} d\eta \right) \widehat{f}(\xi) e^{ix\cdot\xi} d\xi \\
& = \int A_{j,k}(g;x,\xi) \widehat{f}(\xi) e^{ix\cdot\xi} d\xi = T_{A_{j,k}(g;\cdot,\cdot)}(f)(x),
\end{aligned}
\end{equation}
where $T_{A_{j,k}(g;\cdot,\cdot)}$ is a linear operator for each fixed $g$. In order to take advantage of the smoothness of the symbol while viewing the operator as written in \eqref{iteration}, we must deal with each piece of the symbol $a_{j,k}$ depending on how the size of $\xi$ relates to the size of $\eta$. More precisely, for $k \geq j$ we have that $2^k \simeq |\eta| \gtrsim |\xi| \simeq 2^j$ on the $(\xi,\eta)$-support of $a_{j,k}$. Fixing an $\varepsilon > 0$ sufficiently small, we can conclude that
\[
|\partial_\xi^\alpha\partial_\eta^\beta\partial_x^\gamma a_{j,k}(x,\xi,\eta)| \leq C_{\alpha,\beta,\gamma}\langle2^j + 2^k\rangle^{m-\rho(|\alpha|+|\beta|)+\delta|\gamma|} \leq C_{\alpha,\beta,\gamma}2^{j(m_1-\rho|\alpha|-\varepsilon)}2^{k(m_2-\rho|\beta|+\delta|\gamma|-\varepsilon)}
\]
for $m_1,m_2 \leq 0$ such that $m_1 + m_2 = m + 2\varepsilon$. Thus, by \cite[p.~322]{S}, if $m_2 \leq n(\rho - 1)(1/p' - 1/2)$ then we have that
\[
\|\partial_\xi^\alpha A_{j,k}(g;\cdot,\xi)\|_{L^p(\R^n)} \leq C_\alpha 2^{k(m_1-\rho|\alpha|-\varepsilon)}2^{-j\varepsilon}\|g\|_{L^p(\R^n)}.
\]
This shows that $A_{j,k}(g;\cdot,\cdot) \in L^pS^{m_1}_\rho.$ Therefore, using the fact that $p\geq 2,$ an application of Theorem \ref{Lrsym} with the assumption $m_1 < n(\rho - 1)/p'$ yields
\begin{equation} \label{pieceest}
\|T_{a_{j,k}}(f,g)\|_{L^r(\R^n)} \leq C2^{-k\varepsilon}2^{-j\varepsilon}\|f\|_{L^q(\R^n)} \|g\|_{L^p(\R^n)}.
\end{equation}

For the case $k < j$, we can repeat the same argument, but reverse the roles of $\xi$ and $\eta$, and $p$ and $q$, to obtain once again \eqref{pieceest}. We then sum in $j$ and $k$ to obtain boundedness from $L^q \times L^p$ to $L^r$ provided
\begin{equation} \label{weaker}
m < 2n(\rho - 1)\max\left\{\left|\frac{1}{p} - \frac{1}{2}\right|,\left|\frac{1}{q} - \frac{1}{2}\right|\right\} + \frac{n}{2}(\rho - 1).
\end{equation}

This result can be improved by using duality and once again applying Theorem \ref{1in2}. We are concerned with triples of reciprocals of exponents $(1/p,1/q,1/r)$ such that $1/p + 1/q = 1/r$ and $p,q,r \in [1,\infty]$. The set of such triples is a closed triangle with vertices $(0,1,1)$, $(1,0,1)$ and $(0,0,0)$ in the plane $\{(1/p,1/q,1/r) \, | \, 1/p + 1/q = 1/r\}$, which itself lies in $\R^3$ (see Figure \ref{figure}). Considering the edge of the triangle with end-points $(0,1,1)$ and $(1,0,1)$ (that is, where $r=1$) we have proved that $T_a$ maps $L^p \times L^{p'}$ to $L^1$ (of course, here we require $q=p'$) for
\begin{equation*}
m < 2n(\rho - 1)\left|\frac{1}{p} - \frac{1}{2}\right| + \frac{n}{2}(\rho - 1) = 2n(\rho - 1)\left|\frac{1}{p'} - \frac{1}{2}\right| + \frac{n}{2}(\rho - 1).
\end{equation*}
This agrees with the statement of the theorem for these exponents. Equally the theorem in the case $p = q = r = \infty$ (corresponding to the point $(0,0,0)$) is also included in the condition \eqref{weaker}.

Considering the adjoint $T^{*2}$, using duality and applying Theorem \ref{1in2} allows us to conclude that $T_a$ maps $L^p \times L^{\infty}$ to $L^{p}$ for the same range on $m$. Once again, this agrees with the statement of the theorem for these exponents, but this time corresponds to triples $(1/p,1/q,1/r)$ on the line with end-points $(0,1,1)$ and $(0,0,0)$. Similarly, considering the adjoint $T^{*1}$, using duality and applying Theorem \ref{1in2} allows us to conclude that $T_a$ maps $L^\infty \times L^{p'}$ to $L^{p'}$, again, for the same range of $m$. Yet again, this agrees with the statement of the theorem for these exponents, but now corresponds to triples $(1/p,1/q,1/r)$ on the line with end-points $(1,0,1)$ and $(0,0,0)$.

Thus, we have proved the theorem on the edges of the triangle. Finally, the bilinear version of the Riesz-Thorin Interpolation Theorem (see \cite[p.~73]{G}) allows us to complete the proof on the interior of the triangle.
\end{proof}

\section{Boundedness of a subclass of H\"ormander-type bilinear pseudodifferential operators with $m\leq0$} \label{outwith}

In this section we will establish the boundedness of a subclass of bilinear pseudodifferential operators with symbols in the class $S^{0}_{\rho,0}(n,2)$ with $\rho \in (0, 1]$. With this goal in mind, the following lemma will prove to be useful.

\begin{lem} \label{notquite} Given a smooth function $a \colon \R^n \times \R^n \to \C$, define an operator $S$ by
\[
S(F)(x) = \int_{\R^n}\int_{\R^n} a(\xi,\eta)\widehat{F}(\xi,\eta) e^{ix\cdot(\xi+\eta)} d\xi d\eta,
\]
where $\widehat{F}$ denotes the Fourier transform in $\R^n \times \R^n$ $($that is, \eqref{ftN} with $N=2$$)$. The operator $S$ is bounded from $L^2(\R^n \times \R^n)$ to $L^2(\R^n)$ if and only if
\begin{align*}
A & := \left(\sup_{\zeta \in \R^n}\int_{\R^n} |a(\zeta -\eta,\eta)|^2 d\eta\right)^\frac{1}{2} < \infty.
\end{align*}
Moreover,
\[
\|S(F)\|_{L^2(\R^n)} \leq A\|F\|_{L^2(\R^n \times \R^n)}.
\]
\end{lem}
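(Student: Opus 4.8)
The plan is to exploit the special structure of $S$: the variable $x$ enters only through the phase $e^{ix\cdot(\xi+\eta)}$, which depends on $\xi$ and $\eta$ solely through their sum. First I would introduce the total frequency $\zeta = \xi+\eta$ and, for each fixed $\eta$, substitute $\xi = \zeta-\eta$ (a measure-preserving change with $d\xi = d\zeta$). After an application of Fubini's theorem (legitimate for $F \in \mathscr{S}(\R^n\times\R^n)$, then extended by density), this rewrites
\[
S(F)(x) = \int_{\R^n} G(\zeta)\, e^{ix\cdot\zeta}\, d\zeta, \qquad G(\zeta) := \int_{\R^n} a(\zeta-\eta,\eta)\,\widehat{F}(\zeta-\eta,\eta)\, d\eta,
\]
so that $S(F)$ is exactly the Fourier transform of the single function $G$ on $\R^n$. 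Normalising the Fourier transform to be an $L^2$-isometry, Plancherel's theorem reduces the whole problem to comparing $\|G\|_{L^2(\R^n)}$ with $\|F\|_{L^2(\R^n\times\R^n)}$, i.e.\ to understanding the linear map $\widehat F \mapsto G$.

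For the upper bound (the ``if'' direction together with the stated inequality) I would introduce the slice $b_\zeta(\eta) := a(\zeta-\eta,\eta)$, so that $\|b_\zeta\|_{L^2(\R^n_\eta)} \leq A$ for every $\zeta$ by the definition of $A$. Writing $G(\zeta) = \int b_\zeta(\eta)\,\widehat F(\zeta-\eta,\eta)\,d\eta$ and applying the Cauchy--Schwarz inequality in $\eta$ gives
\[
|G(\zeta)|^2 \leq \|b_\zeta\|_{L^2_\eta}^2 \int_{\R^n}|\widehat F(\zeta-\eta,\eta)|^2\,d\eta \leq A^2 \int_{\R^n}|\widehat F(\zeta-\eta,\eta)|^2\,d\eta.
\]
Integrating in $\zeta$, undoing the substitution $\xi=\zeta-\eta$ in the resulting double integral, and using Plancherel on $\R^n\times\R^n$ then yields $\|G\|_{L^2}^2 \leq A^2\|\widehat F\|_{L^2(\R^{2n})}^2 = A^2\|F\|_{L^2(\R^{2n})}^2$, which is the claimed bound (and trivial if $A=\infty$).

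For the necessity (``only if'') direction I would show the Cauchy--Schwarz step is sharp by testing against functions aligned with the slices $b_\zeta$. Since $a$ is continuous, Fatou's lemma shows $\zeta \mapsto \|b_\zeta\|_{L^2_\eta}^2$ is lower semicontinuous, so for each $t<A^2$ the superlevel set $\{\zeta : \|b_\zeta\|_{L^2_\eta}^2 > t\}$ is open and, by the definition of $A$ as a supremum, nonempty; hence it contains a ball $U$ of finite positive measure. Prescribing $\widehat F(\xi,\eta) = \chi_U(\xi+\eta)\,\overline{a(\xi,\eta)}$ (which lies in $L^2(\R^{2n})$ when $A<\infty$, and so determines a genuine $F\in L^2$ by surjectivity of the Fourier transform) makes $G(\zeta) = \chi_U(\zeta)\,\|b_\zeta\|_{L^2_\eta}^2$, whence
\[
\|G\|_{L^2}^2 = \int_U \|b_\zeta\|_{L^2_\eta}^4\,d\zeta \geq t\int_U \|b_\zeta\|_{L^2_\eta}^2\,d\zeta = t\,\|\widehat F\|_{L^2(\R^{2n})}^2 = t\,\|F\|_{L^2(\R^{2n})}^2.
\]
Letting $t \uparrow A^2$ forces $\|S\|_{L^2\to L^2} \geq A$, so finiteness of $A$ is necessary for boundedness, and combined with the previous paragraph it identifies the operator norm.

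I expect the main obstacle to be this necessity direction in the degenerate case, namely when $A=\infty$ (or when $\|b_\zeta\|_{L^2_\eta}=\infty$ on part of $U$), where the test function above need not lie in $L^2$. I would handle it by truncating to $\{|\eta|\leq R\}$, replacing $\overline{a(\xi,\eta)}$ by $\overline{a(\xi,\eta)}\,\chi_{\{|\eta|\leq R\}}$, and letting $R\to\infty$ (and $t\to\infty$) to make the ratio $\|S(F)\|_{L^2}/\|F\|_{L^2}$ arbitrarily large, thereby exhibiting unboundedness. The only other point requiring care is the bookkeeping of the constants in Plancherel's theorem, which I absorb by fixing the $L^2$-isometric normalisation of the Fourier transform once and for all.
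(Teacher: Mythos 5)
Your proof is correct, but it takes a genuinely different route from the paper. The paper runs a $TT^*$ argument: it uses the identity $\|S\|^2 = \|SS^*\|$ and computes that $SS^*$ is the Fourier multiplier operator on $\R^n$ with symbol $m(\zeta) = \int_{\R^n}|a(\zeta-\eta,\eta)|^2\,d\eta$, so both directions of the equivalence (and the norm identity $\|S\| = A$) drop out at once from the standard fact that a multiplier is $L^2$-bounded if and only if its symbol is essentially bounded. You instead work directly on $S$: after the substitution $\zeta = \xi + \eta$ and Plancherel, Cauchy--Schwarz in $\eta$ gives the upper bound, and the necessity is obtained by hand, testing against $\widehat{F}(\xi,\eta) = \chi_U(\xi+\eta)\,\overline{a(\xi,\eta)}$ supported on a superlevel set of $\zeta \mapsto \|a(\zeta-\cdot,\cdot)\|_{L^2}^2$, with a truncation in $\eta$ to cover the case $A = \infty$. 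What the paper's approach buys is brevity: the multiplier characterisation does the work of your entire third and fourth paragraphs automatically. What your approach buys is that it is more elementary and self-contained (no appeal to the $TT^*$ identity or multiplier theory), it exhibits the near-extremizers explicitly, and your Fatou/lower-semicontinuity observation actually addresses a point the paper glosses over: the lemma is stated with a pointwise supremum, whereas the multiplier criterion naturally yields an essential supremum; lower semicontinuity of $\zeta \mapsto \|a(\zeta-\cdot,\cdot)\|_{L^2_\eta}^2$ is precisely what makes the two coincide. Both treatments are equally cavalier about powers of $2\pi$, which you at least flag by fixing an isometric normalisation of the Fourier transform.
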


\begin{proof}
The boundedness of $S$ is equivalent to the boundedness of $S S^* \colon L^2(\R^n) \to L^2(\R^n)$, where $S^*$ is the adjoint operator of $S$:
\[
S^*(g)(x,y) = \int_{\R^n}\int_{\R^n} \overline{a(\xi,\eta)} \widehat{g}(\xi + \eta) e^{i(x\cdot\xi + y\cdot\eta)} d\xi d\eta.
\]
We can readily see that $\widehat{S^*}$ is given by
\[
\widehat{S^*}(g)(\xi,\eta) = \overline{a(\xi,\eta)} \widehat{g}(\xi + \eta),
\]
so
\begin{align*}
& S S^*(g) = \int_{\R^n}\int_{\R^n} a(\xi,\eta)\overline{a(\xi,\eta)} \widehat{g}(\xi + \eta) e^{ix\cdot(\xi+\eta)} d\xi d\eta \\
& = \int_{\R^n}\int_{\R^n} a(\zeta-\eta,\eta)\overline{a(\zeta-\eta,\eta)} \widehat{g}(\zeta) e^{ix\cdot\zeta} d\zeta d\eta
= \int_{\R^n}\left(\int_{\R^n} |a(\zeta-\eta,\eta)|^2 d\eta\right) \widehat{g}(\zeta) e^{ix\cdot\zeta} d\zeta.
\end{align*}
Thus, $SS^*$ is a multiplier and the condition of the lemma is exactly the condition required for its boundedness on $L^2(\R^n)$.
\end{proof}

\begin{thm} \label{modulationtype}
If $0< \rho \leq 1$ and $a \in S^0_{\rho,0}(n,2)$ are such that, for each multi-index $\alpha \in\Z^{n}$,
\begin{equation} \label{hypo}
\sup_{x_0,\zeta \in \R^n}\left(\int_{B_{1}(x_0)}\int_{\R^n} |\partial_x^\alpha a(x,\zeta - \eta,\eta)|^2 d\eta dx\right)^\frac{1}{2} < \infty,
\end{equation}
where $B_1(x_0)$ is the unit ball centred at $x_0$, then $T_a$ is bounded operator from $L^p(\R^n) \times L^q(\R^n)$ to $L^r(\R^n)$ for $p,q \in [2,\infty]$ and $r \in [1,2]$ such that $1/p + 1/q = 1/r$. This corresponds to $(1/p,1/q,1/r)$ contained in the closed triangle of $\mathrm{Figure}$ $\ref{figure}$ with vertices $(1/2,1/2,1)$, $(0,1/2,1/2)$ and $(1/2,0,1/2)$.
\end{thm}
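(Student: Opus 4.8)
The plan is to establish the estimate at the three vertices of the triangle in Figure \ref{figure} and then fill the interior by bilinear complex interpolation. The vertex $(1/2,1/2,1)$ is the bound $\|T_a(f,g)\|_{L^2\times L^2\to L^1}$, namely $\|T_a(f,g)\|_{L^1}\lesssim\|f\|_{L^2}\|g\|_{L^2}$, while $(0,1/2,1/2)$ and $(1/2,0,1/2)$ are $\|T_a(f,g)\|_{L^2}\lesssim\|f\|_{L^\infty}\|g\|_{L^2}$ and $\|T_a(f,g)\|_{L^2}\lesssim\|f\|_{L^2}\|g\|_{L^\infty}$. Since the class $S^0_{\rho,0}(n,2)$ and the hypothesis \eqref{hypo} are both invariant under the interchange $(\xi,\eta)\mapsto(\eta,\xi)$, which swaps the roles of $f$ and $g$ (the change of variable $\eta\mapsto\zeta-\eta$ in \eqref{hypo} makes this explicit), the third vertex follows from the second. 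Thus it suffices to prove the bounds at $(1/2,1/2,1)$ and $(0,1/2,1/2)$; the bilinear Riesz--Thorin theorem (\cite[p.~73]{G}), applied exactly as at the end of the proof of Theorem \ref{smooth pseudo bilinear}, then yields the whole closed triangle. Note that I avoid passing to the adjoints $T_a^{*j}$ here, since Theorem \ref{1in2} does not obviously preserve \eqref{hypo}.

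Both corner estimates rest on splitting $T_a$ into a near-diagonal and a far part. Since $\rho>0$, Corollary \ref{zlemma2} applies with $m=0$ and shows that the kernel $K(x,y_1,y_2)$ decays rapidly as soon as $\max(|x-y_1|,|x-y_2|)\geq1$; indeed $|K(x,y_1,y_2)|\lesssim\langle x-y_1\rangle^{-L}\langle x-y_2\rangle^{-L}$ on that region for every $L$. Writing $T_a=T_a^{\mathrm{near}}+T_a^{\mathrm{far}}$ according to a cutoff of the kernel to $\max(|x-y_1|,|x-y_2|)<1$, Theorem \ref{convolve} gives the pointwise domination $|T_a^{\mathrm{far}}(f,g)(x)|\lesssim M(f)(x)\,M(g)(x)$. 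This disposes of the far part at both corners at once: at $(1/2,1/2,1)$ by $\|M(f)M(g)\|_{L^1}\leq\|M(f)\|_{L^2}\|M(g)\|_{L^2}\lesssim\|f\|_{L^2}\|g\|_{L^2}$, and at $(0,1/2,1/2)$ by $\|M(f)M(g)\|_{L^2}\leq\|M(f)\|_{L^\infty}\|M(g)\|_{L^2}\lesssim\|f\|_{L^\infty}\|g\|_{L^2}$.

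It remains to treat $T_a^{\mathrm{near}}$, and this is where Lemma \ref{notquite} and the hypothesis \eqref{hypo} enter. Fix a lattice of unit cubes $\{Q_\nu\}$ partitioning $\R^n$, let $\tilde Q_\nu$ be a fixed dilate, and observe that for $x\in Q_\nu$ the kernel of $T_a^{\mathrm{near}}$ only sees the inputs on $\tilde Q_\nu$, so $T_a^{\mathrm{near}}(f,g)(x)=T_a(f\chi_{\tilde Q_\nu},g\chi_{\tilde Q_\nu})(x)-T_a^{\mathrm{far}}(f\chi_{\tilde Q_\nu},g\chi_{\tilde Q_\nu})(x)$ there. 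As the far piece is already controlled, the key local estimate I need is $\|T_a(f\chi_{\tilde Q_\nu},g\chi_{\tilde Q_\nu})\|_{L^2(Q_\nu)}\lesssim\|f\|_{L^2(\tilde Q_\nu)}\|g\|_{L^2(\tilde Q_\nu)}$, with constant uniform in $\nu$. To prove it I expand $a$ in a Fourier series in the spatial variable over $Q_\nu$: choosing a smooth bump $\psi_\nu$ adapted to $\tilde Q_\nu$ and equal to $1$ on $Q_\nu$, I write $a(x,\xi,\eta)=\sum_\ell c_\ell^\nu(\xi,\eta)e^{i\ell\cdot x}$ on $Q_\nu$, where $c_\ell^\nu(\xi,\eta)=\int a(x,\xi,\eta)\psi_\nu(x)e^{-i\ell\cdot x}\,dx$ is smooth in $(\xi,\eta)$ because $a$ is. For $x\in Q_\nu$ this turns the operator into $\sum_\ell e^{i\ell\cdot x}T_{c_\ell^\nu}(f\chi_{\tilde Q_\nu},g\chi_{\tilde Q_\nu})(x)$, a superposition of the translation-invariant bilinear multipliers governed by Lemma \ref{notquite}. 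That lemma bounds each piece in $L^2(\R^n)$ by $A_\ell^\nu\|f\|_{L^2(\tilde Q_\nu)}\|g\|_{L^2(\tilde Q_\nu)}$, where $A_\ell^\nu=\sup_\zeta(\int|c_\ell^\nu(\zeta-\eta,\eta)|^2\,d\eta)^{1/2}$; integrating by parts in $x$ and invoking \eqref{hypo} for every multi-index $\alpha$ shows $A_\ell^\nu\lesssim_N\langle\ell\rangle^{-N}$ uniformly in $\nu$, so the $\ell$-sum converges and the local estimate follows. Summing over $\nu$ then closes both corners: at $(0,1/2,1/2)$ one uses $\|T_a^{\mathrm{near}}(f,g)\|_{L^2}^2=\sum_\nu\|T_a^{\mathrm{near}}(f,g)\|_{L^2(Q_\nu)}^2$ together with $\|f\|_{L^2(\tilde Q_\nu)}\leq|\tilde Q_\nu|^{1/2}\|f\|_{L^\infty}$ and the bounded overlap of the $\tilde Q_\nu$, and at $(1/2,1/2,1)$ one uses $\|T_a^{\mathrm{near}}(f,g)\|_{L^1}\leq\sum_\nu|Q_\nu|^{1/2}\|T_a^{\mathrm{near}}(f,g)\|_{L^2(Q_\nu)}$ followed by the Cauchy--Schwarz inequality in $\nu$.

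The main obstacle is precisely this local estimate, because of a genuine tension: localising the inputs $f$ and $g$ to a neighbourhood of the output cube requires the kernel decay of the smooth operator (Corollary \ref{zlemma2}), whereas exploiting the $L^2$ boundedness encoded in \eqref{hypo} forces a passage to translation-invariant multipliers through the Fourier expansion in $x$, which destroys that decay. Separating the near and far parts of the kernel first, and only then expanding in $x$, is what reconciles the two. Consequently the technical heart is the verification that the Fourier coefficients $c_\ell^\nu$ simultaneously inherit the smoothness in $(\xi,\eta)$ demanded by Lemma \ref{notquite} and rapid decay in $\ell$ that is uniform in $\nu$ — and it is exactly the requirement of \eqref{hypo} for all $\alpha$, uniformly in the spatial base point, that delivers the latter.
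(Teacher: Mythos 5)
Your proposal is correct in substance, and its engine is the same as the paper's: both expand the spatial dependence of $a$ into modulations $e^{i\lambda\cdot x}$, reduce to translation-invariant bilinear multipliers controlled by Lemma \ref{notquite}, obtain rapid decay in the modulation parameter by integrating by parts in $x$ and invoking \eqref{hypo} for all $\alpha$, handle the off-diagonal contributions with Corollary \ref{zlemma2}, and finish with bilinear Riesz--Thorin. The surrounding architecture, however, is genuinely different. The paper reduces everything to the single corner $L^2\times L^2\to L^1$ via Theorem \ref{1in2} and duality, and then localises following \cite[pp.~234--7]{S}: first the compactly $x$-supported case (continuous Fourier superposition in $x$), then the general case via the weighted local estimate \eqref{stein}, proved with the cut-offs $\psi$, $\varphi$ and the kernel bounds, and finally integrated over the centre $x_0$. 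You instead prove all three corner estimates directly (the third by the $\xi\leftrightarrow\eta$ symmetry, which indeed preserves both the class and \eqref{hypo}), tile $\R^n$ by unit cubes, split the kernel into near and far parts, and replace the Fourier transform in $x$ by a Fourier series on each cube. Your route costs more bookkeeping, but it buys something real: the paper's reduction tacitly requires the adjoint symbols $b_j$ furnished by Theorem \ref{1in2} to satisfy \eqref{hypo} as well (otherwise the proved $L^2\times L^2\to L^1$ bound cannot be applied to $T_{b_j}$), and this is neither stated nor verified anywhere; note that \eqref{hypo} is an $L^2$ condition along the lines $\xi+\eta=\zeta$, whereas the leading term of the first adjoint symbol is $a$ evaluated at $(-\xi-\eta,\eta)$, which turns it into an $L^2$ condition along the lines $\xi=\mathrm{const}$ --- a genuinely different condition. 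Your direct proof of the corners sidesteps this issue entirely and is, in that respect, more self-contained than the paper's argument.

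One loose end in your write-up should be repaired. At the $(1/2,1/2,1)$ corner, the remark that ``the far piece is already controlled'' does not cover the local correction term $T_a^{\mathrm{far}}(f\chi_{\tilde Q_\nu},g\chi_{\tilde Q_\nu})$ on $Q_\nu$: the global bound $\|M(f)M(g)\|_{L^1}\lesssim\|f\|_{L^2}\|g\|_{L^2}$ controls $T_a^{\mathrm{far}}(f,g)$, but the localised far pieces, dominated by $M(f\chi_{\tilde Q_\nu})M(g\chi_{\tilde Q_\nu})$, do not sum correctly over $\nu$ after passing to $L^2(Q_\nu)$ norms (that route would require $L^4$ control of $f$ and $g$). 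The fix is immediate: for $x\in Q_\nu$ the inputs are supported in the bounded set $\tilde Q_\nu$, so Cauchy--Schwarz against the decaying far kernel gives the pointwise bound $|T_a^{\mathrm{far}}(f\chi_{\tilde Q_\nu},g\chi_{\tilde Q_\nu})(x)|\lesssim\|f\|_{L^2(\tilde Q_\nu)}\|g\|_{L^2(\tilde Q_\nu)}$, which is exactly what your cube summation needs at both corners. With this, and with the routine justification of interchanging the $\ell$-sum with the frequency integrals (at the same level of rigour as the paper's interchange of the $\lambda$-integral), your argument is complete.
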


\begin{proof}
First, let us observe that it suffices to prove the boundedness from $L^2 \times L^2$ to $L^1$. Indeed, assuming this and using Theorem \ref{1in2}, together with duality arguments and the multilinear Riesz-Thorin Theorem (see \cite[p.~73]{G}), yield the theorem.

Let us now suppose that the symbol $a$ has compact $x$-support, say contained in the unit ball $B = B_1(x_0)$. We follow \cite[pp.~234-5]{S} and write
\[
a(x,\xi,\eta) = \int \widehat{a}(\lambda,\xi,\eta) e^{i\lambda\cdot x} dx
\]
and so
\begin{align*}
T_a(f,g)(x) & = \iint a(x,\xi,\eta)\widehat{f}(\xi)\widehat{g}(\eta) e^{ix\cdot(\xi+\eta)} d\xi d\eta \\
& = \iiint \widehat{a}(\lambda,\xi,\eta) e^{i\lambda\cdot x} \widehat{f}(\xi)\widehat{g}(\eta) e^{ix\cdot(\xi+\eta)} d\xi d\eta d\lambda \\
& = \int T^\lambda_a(f,g)(x) d\lambda,
\end{align*}
where
\begin{align*}
T^\lambda_a(f,g)(x) & =  e^{i\lambda\cdot x} \iint \widehat{a}(\lambda,\xi,\eta) \widehat{f}(\xi)\widehat{g}(\eta) e^{ix\cdot(\xi+\eta)} d\xi d\eta = e^{i\lambda\cdot x} T_{\widehat{a}(\lambda,\cdot,\cdot)}(f,g)(x)
\end{align*}
and $T_{\widehat{a}(\lambda,\cdot,\cdot)}$ is a multiplier operator for each $\lambda$. Therefore, since
\[
\widehat{a}(\lambda,\xi,\eta) = \int_B a(x,\xi,\eta) e^{-i\lambda\cdot x} dx,
\]
we have that
\[
(i\lambda)^\alpha\widehat{a}(\lambda,\xi,\eta) = \int_B \partial_x^\alpha a(x,\xi,\eta) e^{-i\lambda\cdot x} dx
\]
and so
\[
|\lambda^\alpha|^2 \int |\widehat{a}(\lambda,\zeta-\eta,\eta)|^2 d\eta \lesssim \int\int_B |\partial_x^\alpha a(x,\zeta-\eta,\eta)|^2 dx d\eta.
\]
This means that, by our hypotheses \eqref{hypo},
\[
\int |\widehat{a}(\lambda,\zeta-\eta,\eta)|^2 d\eta \lesssim \langle \lambda \rangle^{-N}
\]
for any $N \in \Z$.
Consequently, by Lemma \ref{notquite},
\[
\|T_{\widehat{a}(\lambda,\cdot,\cdot)}(f,g)\|_{L^1(B)} \lesssim \|T_{\widehat{a}(\lambda,\cdot,\cdot)}(f,g)\|_{L^2(B)}
\lesssim \langle \lambda \rangle^{-N} \|f\|_{L^2(\R^n)}\|g\|_{L^2(\R^n)}.
\]
Once again using the support properties of $a$, we conclude that
\[
\|T_a(f,g)\|_{L^1(\R^n)} \leq \int \|T^\lambda_a(f,g)\|_{L^1(B)} d\lambda \lesssim \int \langle \lambda \rangle^{-N} \|f\|_{L^2(\R^n)}\|g\|_{L^2(\R^n)} d\lambda = C\|f\|_{L^2(\R^n)}\|g\|_{L^2(\R^n)},
\]
which proves the theorem under the extra hypothesis that $a$ has compact $x$-support.\\

To remove the hypothesis that $a$ has compact $x$-support, we follow the argument of \cite[pp.~236-7]{S}. Observe that it suffices to show that, for each $x_0 \in \R^n$,
\begin{equation} \label{stein}
\int_{B_{1}(x_0)} |T_a(f,g)(x)| dx \lesssim \left(\int_{\R^n} \frac{|f(y)|^2 dy}{(1+|y-x_0|)^N}\int_{\R^n} \frac{|g(z)|^2 dz}{(1+|z-x_0|)^N}\right)^{1/2},
\end{equation}
for all $N \geq 0.$ Indeed, choosing $N>n$, integrating \eqref{stein} in $x_0$, using the Cauchy-Schwarz inequality and interchanging the order of integration produces the estimate
\[
\|T_a(f,g)\|_{L^1(\R^n)} \lesssim \|f\|_{L^2(\R^n)}\|g\|_{L^2(\R^n)}.
\]

To prove \eqref{stein} we introduce the cut-off function $\psi \colon \R^n \to \R$, which is identically one on $B_{2}(x_0)$ and zero outside its concentric double $B_4(x_0)$. Define $f_1(x) = \psi(x) f(x)$ and $g_1(y) = \psi(y) g(y)$ and $F(x,y) = f(x)g(y) - f_1(x)g_1(y)$. Let $\varphi \colon \R^n \to \R$ be a second nonnegative cut-off function which is identically one on $B_{1}(x_0)$ and supported in $B_{2}(x_0)$. Using the cut-off function $\psi$, we can write
\begin{align} \label{parts}
T_{a}(f,g)(x) = T_a(f_1,g_1)(x) + \iint K(x,x-y,x-z)F(y,z) dy dz
\end{align}
where
\begin{equation*}\label{kernel of the bilinear op}
K(x,z_1 , z_2)=\iint a(x,\xi,\eta) e^{iz_1\cdot \xi} e^{iz_2\cdot \eta} d\xi d\eta.
\end{equation*}
Now using the cut-off function $\varphi,$ our previous boundedness result concerning bilinear operators with compact spatial support yields
\begin{align*}
\int_{B_{1}(x_0)} |T_{a}(f_1,g_1)(x)| dx &= \int_{B_{1}(x_0)} \varphi(x) |T_{ a}(f_1,g_1)(x)| dx\\ &\leq \int_{\R^n} |T_{\varphi a}(f_1,g_1)(x)| dx \lesssim \|f_1\|_{L^2(\R^n)}\|g_1\|_{L^2(\R^n)},
\end{align*}
and this is, in turn, controlled by
\[
\left(\int_{\R^n} \frac{|f(x)|^2 dx}{(1+|x-x_0|)^N}\int_{\R^n} \frac{|g(y)|^2 dy}{(1+|y-x_0|)^N}\right)^{1/2}
\]
for any $N \geq 0$, because of the support properties of $f_1$ and $g_1$. To estimate the contribution of the remaining term in \eqref{parts}, we use the kernel estimate of Corollary \ref{zlemma2}. We need to estimate
\begin{equation}\label{remainder kernel estim}
\int_{B_{1}(x_0)} \left|\int_{\R^n} \int_{\R^n} K(x,x-y,x-z)F(y,z) dydz\right| dx,
\end{equation}
but since $F$ is supported outside $B_{2}(x_0) \times B_{2}(x_0)$, Corollary \ref{zlemma2} yields that for all $N\geq 0$, \eqref{remainder kernel estim} is majorised by
\begin{align*}
& \int_{B_{1}(x_0)}\iint_{(B_2(x_0)\times B_2(x_0))^c} \frac{|F(y,z)|}{(1+|x-y|)^{N}(1+|x-z|)^{N}} dydz dx \\
& \lesssim \iint_{\R^n \times \R^n} \frac{|f(y)g(z)| + |f_1(y)g_1(z)|}{(1+|x_0-y|)^{N}(1+|x_0-z|)^{N}} dydz,
\end{align*}
and using the Cauchy-Schwarz inequality and the fact that $|f_1(y)g_1(z)|\lesssim |f(y)g(z)|$, we have
\begin{align*}
& \iint_{\R^n \times \R^n} \frac{|f(y)g(z)| + |f_1(y)g_1(z)|}{(1+|x_0-y|)^{N}(1+|x_0-z|)^{N}} dydz\\
& \lesssim \left(\int_{\R^n} \frac{|f(y)|^2 dy}{(1+|y-x_0|)^N}\int_{\R^n} \frac{dy}{(1+|y-x_0|)^N}\int_{\R^n} \frac{|g(z)|^2 dz}{(1+|z-x_0|)^N}\int_{\R^n} \frac{dz}{(1+|z-x_0|)^N}\right)^{1/2}\\
&\lesssim \left(\int_{\R^n} \frac{|f(y)|^2 dy}{(1+|y-x_0|)^N}\int_{\R^n} \frac{|g(z)|^2 dz}{(1+|z-x_0|)^N}\right)^{1/2},
\end{align*}
provided $N>n.$ This completes the proof of \eqref{stein} and with it, the theorem.
\end{proof}

\parindent 0pt

\footnotemark[2]{Department of Mathematics, Oregon State University, Corvallis, Oregon 97331-4605, United States.}

\footnotemark[3]{Department of Mathematics and the Maxwell Institute of Mathematical Sciences, Heriot-Watt University, Colin Maclaurin Building, Edinburgh, EH14 4AS, United Kingdom.}

\end{document}